\DeclareMathAlphabet{\mathpzc}{OT1}{pzc}{m}{it}
\title[Resonances for the cases $BC_2$/$C_2$]{Resonances for the Laplacian: \\the cases $BC_2$ and $C_2$ (except $\SO_0(p,2)$ with $p>2$ odd) }
\author{J. Hilgert}
\address{Department of Mathematics,
Paderborn University,
Warburger Str. 100,
D-33098 Paderborn,
Germany}
\email{hilgert@math.uni-paderborn.de}
\author{A. Pasquale}
\address{Universit\'e de Lorraine, Institut Elie Cartan de Lorraine, UMR CNRS 7502, Metz, F-57045, France}
\email{angela.pasquale@univ-lorraine.fr}
\author{T. Przebinda}
\address{Department of Mathematics, University of Oklahoma, Norman, OK 73019, USA}
\email{tprzebinda@ou.edu}
\newcommand{\leftFpar}{[\hskip-1.6pt[}
\newcommand{\rightFpar}{]\hskip-1.6pt]}
\newcommand{\bigleftFpar}{\big[\hskip-3.1pt\big[}
\newcommand{\bigrightFpar}{\big]\hskip-3pt\big]}
\newcommand{\eps}{\varepsilon}
\def\g{\mathfrak g}
\def\k{\mathfrak k}
\def\p{\mathfrak p}
\def\R{\mathbb{R}}
\def\C{\mathbb{C}}
\def\a{\mathfrak a}
\def\ss1{\mathfrak s_{\overline 1}}
\def\hs1{\mathfrak h_{\overline 1}}
\def\Dg{\mathrm{D}}
\def\G{\mathrm{G}}
\def\K{\mathrm{K}}
\def\M{\mathrm{M}}
\def\Sg{\mathrm{S}}
\def\Spin{\mathrm{Spin}}
\def\Bbb{\mathbb}
\def\SL{\mathrm{SL}}
\def\SO{\mathrm{SO}}
\def\SU{\mathrm{SU}}
\def\Sp{\mathrm{Sp}}
\def\Ug{\mathrm{U}}
\def\Eg{\mathrm{E}}
\def\id{\mathrm{id}}
\def\B{\mathrm{B}}
\def\rl{\mathrm{l}}
\def\mrm{\mathrm{m}}
\def\rs{\mathrm{s}}
\def \wt{\widetilde}
\def\bi{\mathbf{i}}
\def\W{\mathsf{W}}
\def\X{\mathsf{X}}
\def\Zb{\mathbb {Z}}
\def\Re{\mathop{\hbox{\rm Re}}\nolimits}
\def\Im{\mathop{\hbox{\rm Im}}\nolimits}
\def\lim{\mathop{\hbox{\rm lim}}\nolimits}
\newcommand\inner[2]{\langle #1,#2\rangle}
\def\Res{\mathop{\hbox{\rm Res}}\limits}
\newcommand{\cHC}{c_{\rm{HC}}}
\newcommand{\cz}{{\mathop{\mathrm c}}}
\newcommand{\sz}{{\mathop{\mathrm s}}}
\def\fonttitre{\textsf}
\newcounter{thh}
\newtheorem{thm}[thh]{\fonttitre{Theorem}}
\newtheorem{pro}[thh]{\fonttitre{Proposition}}
\newtheorem*{pro*}{\fonttitre{Proposition}}
\newtheorem{cor}[thh]{\fonttitre{Corollary}}
\newtheorem*{coro*}{\fonttitre{Corollary}}
\newtheorem{lem}[thh]{\fonttitre{Lemma}}
\theoremstyle{definition} \newtheorem{rem}{\fonttitre{Remark}}
\newtheorem*{defi*}{\fonttitre{D??finition}}
\newtheorem*{nota*}{\fonttitre{Notation}}
\def\muet{ \ifthenelse{\equal{a}{b}}}
\def\nn{\nonumber}
\begin{document}

\subjclass[2010]{Primary: 43A85; secondary: 58J50, 22E30}
\keywords{Resonances, resolvent, Laplacian, Riemannian symmetric spaces of the noncompact type, direct products, rank one}

%

\maketitle
\tableofcontents

\section*{Introduction}
The study of resonances has started in quantum mechanics, where they are linked to the metastable states of a system. Mathematically, the resonances appear as poles of the meromorphic continuation of the resolvent $(H-z)^{-1}$ of a Hamiltonian $H$ acting on a space of functions $\mathcal{F}$ on which $H$ is not selfadjoint. In the last thirty years, several articles have considered the case where $H$ is the Laplacian of a Riemannian symmetric space of the noncompact type $\X$ and $\mathcal{F}$ is the space $C_c^\infty(\X)$ of smooth compactly supported functions on $\X$. The basic problems are the existence, location, counting estimates and geometric interpretation of the resonances. All these problems are nowadays well understood when $\X$ is of real rank one, such as the real hyperbolic spaces. 
The situation is completely different for Riemannian symmetric spaces of higher rank.  The
pioneering articles proving the analytic continuation of the resolvent of the Laplacian operator across its continuous spectrum are \cite{MV05} and \cite{Str05}. However, in these articles, the domains where the continuation was obtained is not sufficiently large to cover the region where the resonances could possibly be found. Indeed, the existence of resonances is linked to the singularities of the Plancherel measure on $\X$. The basic question, whether resonances exist or not for general Riemannian symmetric spaces for which the Plancherel measure is singular, is still open. If the general picture is still unknown, some complete examples in rank 2 have been treated recently: $\SL(3,\R)/\SO(3)$ in \cite{HPP14} and 
the direct products $\X_1\times \X_2$ of two rank-one Riemannian symmetric spaces of the noncompact type in \cite{HPP15}.

The present paper is a natural continuation of \cite{HPP15} and deals with the cases of 
Riemannian symmetric spaces $\X=\G/\K$ of real rank two and restricted root system $BC_2$ or $C_2$ except the case when $\G=\SO_0(p,2)$ with $p>2$ odd. The reason is that for all the spaces $\X$ considered here the analysis of the meromorphic continuation of the resolvent of the Laplacian can be deduced from the same problem on a direct product $\X_1\times \X_1$ of a Riemannian symmetric space of rank one not isomorphic to the real hyperbolic space. 

We prove that for all the spaces $\X$ we consider, the resolvent of the Laplacian of $\X$ can be lifted to a meromorphic function on a Riemann surface which is a branched covering of $\mathbb C$. Its poles, that is the resonances of the Laplacian, are explicitly located on this Riemann surface. If $z_0$ is a resonance of the Laplacian, then the (resolvent) residue operator at $z_0$ is the linear operator 
\begin{equation}
\label{residueop1}
{\Res}_{z_0} \wt R: C^\infty_c(\X)\to C^\infty(\X)
\end{equation}
defined by 
\begin{equation}
\label{residueop2}
\big({\Res}_{z_0} \wt R f\big)(y)={\Res}_{z=z_0} [R(z)f](y) \qquad (f\in C_c^\infty(\X), \, y\in \X)\,.
\end{equation}
Since the meromorphic extension takes place on a Riemann surface, the right-hand side of 
(\ref{residueop2}) is computed with respect to some coordinate charts and hence determined up to constant multiples. However, the image ${\Res}_{z_0}\wt R\big(C^\infty_c(\X)\big)$ is a well-defined subspace of $C^\infty(\X)$. 
Its dimension is the rank of the residue operator at $z_0$. We prove that 
${\Res}_{z_0}\wt R$ acts on $C^\infty_c(\X)$ as a convolution by a finite linear combination of 
spherical functions of $\X$ and is of finite rank. More precisely, write $\X=\G/\K$ for a 
connected noncompact real semisimple Lie group with finite center $\G$ with maximal compact subgroup $\K$. Then the space ${\Res}_{z_0}\wt R\big(C^\infty_c(\X)\big)$ is a $\G$-module which is a finite direct sum of finite-dimensional irreducible spherical representations of $\G$. The trivial representation of $\G$
occurs for the residue operator at the first singularity, associated with the bottom of the spectrum of the Laplacian. 

\subsubsection*{Acknowledgements} The second author would like to thank the University of Oklahoma as well as the organizers of the XXXV Workshop on Geometric Methods in Physics, Bialowieza, for their hospitality and financial support.
The third author gratefully acknowledges partial support from the NSA grant H98230-13-1-0205. 

\medskip

\section{Preliminaries}

\subsection{General notation}
We use the standard notation $\mathbb Z$, $\R$,  $\R^+$, $\C$ and $\C^\times$ for the integers, the reals, the positive reals, the complex numbers and the non-zero complex numbers, respectively. For $a\in \Zb$, the symbol $\mathbb Z_{\geq a}$ denotes the set of integers $\geq a$. We write $\leftFpar a,b \rightFpar=[a,b]\cap \Zb$ for the discrete interval of integers in $[a,b]$.
The interior of an interval $I\subseteq \R$ (with respect to the usual topology on the real line) will be indicated by $I^\circ$.
The upper half-plane in $\C$ is $\C^+=\{z \in \C:\Im z>0\}$; the lower half-plane $-\C^+$ is denoted $\C^-$.
If $\X$ is a manifold, then $C^\infty(\X)$ and $C^\infty_c(\X)$ respectively denote the space of smooth functions and the space of smooth compactly supported functions on $\X$.

\subsection{Noncompact irreducible Riemannian symmetric spaces of type $BC_2$ or $C_2$}
\label{subsection:symmspaces}
Let $\X=\G/\K$ be an irreducible Riemannian symmetric space of the noncompact type and (real) rank $2$. Hence $\G$ is a connected noncompact semisimple real Lie group with finite center and $\K$ is a maximal compact subgroup of $\G$. We can suppose that $\G$ is simple and admits a faithful linear representation.  Let $\g$ and $\k$ be respectively the Lie algebras of $\G$ and $\K$, and let $\g=\k\oplus \p$ be the corresponding Cartan decomposition. 
Let us fix a maximal abelian subspace $\a$ of $\p$.  The (real) rank $2$ condition means that 
$\a$ is a 2-dimensional real vector space. We denote by $\a^*$ the dual space of $\a$ and by
$\a_\C^*$ the complexification of $\a^*$. The Killing form of $\g$ restricts to an inner product on $\a$. We extend it to $\a^*$ by duality. The $\C$-bilinear extension of $\inner{\cdot}{\cdot}$ to $\a_\C^*$ will be indicated by the same symbol. 

Let $\Sigma$ be the root systems of $(\g,\a)$.
In the following, we suppose that $\Sigma$ is either of type $BC_2$ or of type $C_2=B_2$.
The set $\Sigma^+$ of positive restricted roots is the form
$\Sigma^+=\Sigma^+_\rl \sqcup \Sigma^+_\mrm \sqcup \Sigma^+_\rs$, where
\begin{eqnarray}
\label{eq:roots}
\Sigma^+_\rl=\{\beta_1, \beta_2\}\,,  \notag\, \quad
\Sigma^+_\mrm=\big\{\frac{\beta_2\pm \beta_1}{2}\}\,,\quad
\Sigma^+_\rs=\big\{\frac{\beta_1}{2}, \frac{\beta_2}{2}\big\} 
\end{eqnarray}
with $\Sigma^+_\rs=\emptyset$ in the case $C_2=B_2$.
The two elements of $\Sigma^+_\rl$ form an orthogonal basis of $\a^*$ and have same norm $b$. The elements of $\Sigma^+_\mrm$ and $\Sigma^+_\rs$ have therefore norm $\frac{\sqrt{2}}{2} b$ and $\frac{b}{2}$, respectively. We define
$\mathfrak{a}^*_+=\{\lambda\in \mathfrak{a}^*: \text{$\inner{\lambda}{\beta}>0$ for all $\beta\in\Sigma^+$}\}$.

The system of positive unmultipliable roots is $\Sigma_*^+=\Sigma^+_\rl \sqcup \Sigma^+_\mrm$.
The set $\Sigma_*$ of unmultipliable roots is a root system. A basis of positive simple roots
for $\Sigma_*$ is $\big\{\beta_1\,,\frac{\beta_2- \beta_1}{2}\}$.

The Weyl group $\W$ of $\Sigma$ acts on the roots by permutations and sign changes. For  $a\in \{\rl,\mrm,\rs\}$ set $\Sigma_a=\Sigma^+_a \sqcup (-\Sigma^+_a)$.  Then each $\Sigma_a$ is a Weyl group orbit in $\Sigma$.
The  root multiplicities are therefore triples $m=(m_\rl,m_\mrm,m_\rs)$ so that $m_a$ is the (constant) value of $m$ on $\Sigma_a$ for $a\in \{\rl,\mrm,\rs\}$. By classification, if $\X=\G/\K$ is Hermitian, then $m_\rl=1$. We adopt the convention that $m_\rs=0$ means that $\Sigma^+_\rs=\emptyset$, i.e. $\Sigma$ is of type $C_2$. In this case, if $\X$ is Hermitian, then  $\X$ is said to be of tube type.

The half sum of positive roots, counted with their multiplicities, is indicated by $\rho$. Hence
\begin{equation}
\label{eq:rho}
 2\rho=\sum_{\alpha \in \Sigma^+} m_\alpha \alpha= \Big(m_\rl+ \frac{m_\rs}{2}\Big) \beta_1 + \Big(m_\rl+m_\mrm+\frac{m_\rs}{2}\Big) \beta_2\,.
\end{equation}

Table 1 contains the rank-two irreducible Riemannian symmetric spaces $\G/\K$ with root systems of type $BC_2$, their root systems, the multiplicities $m=(m_\rl,m_\mrm,m_\rs)$, and the value of $\rho$.

\smallskip
\begin{table}[!ht]
\begin{adjustwidth}{-1.5cm}{}
\setlength{\extrarowheight}{.2em}
\begin{tabular}{|c||c|c|c|c|c|}
\hline
Type &AIII & BDI & CII & DIII  & EIII\\[.2em]
\hline
$\G$ & $\SU(p,2)$ $(p>2)$ &$\SO_0(p,2)\;  (p>2)$  & $\Sp(p,2)\;  (p\geq 2)$
& $\SO^*(10)$ & $E_{6(-14)}$
 \\[.2em]
\hline
$\K$ & $\Sg(\Ug(p)\times \Ug(2))$ &$\SO(p)\times\SO(2)$ & $\Sp(p)\times\Sp(2)$  &
$\Ug(5)$  & $\Spin(10)\times\Ug(1)$
 \\[.2em]
\hline
\scriptsize{Hermitian}  & yes & yes  & no  &  yes  & yes
\\[.2em]
\hline
$\Sigma$ & $BC_2$  & $C_2$  & \begin{tabular}{l}
$p=2$: $C_2$ \\
$p>2$: $BC_2$
\end{tabular}  & $BC_2$ & $BC_2$
\\[.2em]
\hline
{\footnotesize $m=(m_\rl,m_\mrm,m_\rs)$} &  {\footnotesize $(1,2,2(p-2))$} & {\footnotesize $(1,p-2,0)$} &
{\footnotesize 
$(3,4,4(p-2))$}
& {\footnotesize $(1,4,4)$}  & {\footnotesize $(1,6,8)$} \\[.2em]
\hline
 $2\rho$ & {\footnotesize $(p-1)\beta_1+(p+1)\beta_2$} & {\footnotesize $\beta_1+(p-1)\beta_2$} & {\footnotesize $5\beta_1+(5+2(p-2))\beta_2$} & {\footnotesize $3\beta_1+7\beta_2$}  & {\footnotesize $5\beta_1+8\beta_2$}
  \\[.2em]
\hline
\end{tabular}
\medskip
\caption{Rank-two irreducible symmetric spaces with root systems $BC_2$ or $C_2$}
\end{adjustwidth}
\end{table}

Notice that we are using special low rank isomorphisms (see e.g. \cite[Ch. X, \S 6, no.4]{He1}), which allow us to omit some cases:
\begin{eqnarray}
\SU(2,2)/\Sg(\Ug(2)\times \Ug(2)) &\cong & \SO_0(4,2)/ (\SO(4)\times \SO(2))\,,\\
\Sp(2,\R)/\Ug(2) &\cong &\SO_0(3,2)/ (\SO(3)\times \SO(2))\,,\\
\SO^*(8)/\Ug(4) &\cong &\SO_0(6,2)/ (\SO(6)\times \SO(2))\,.
\end{eqnarray}
Observe also that $\SO_0(2,2)/(\SO(2)\times \SO(2)) \cong \SL(2,\R)\times \SL(2,\R)$ is not in the list because not irreducible.

\begin{rem}
Up to isomorphisms, there are four additional irreducible Riemannian symmetric spaces of rank two:
\begin{enumerate}
\item
$\SL(3,\R)/\SO(3)$ (type AI, with root system of type $A_2$ and one root multiplicity $m=1$; see \cite{HPP14}),
\item
$\SU^*(6)/\Sp(3)$ (type AII, with root system of type $A_2$ and one even root multiplicity $m=4$; see \cite{Str05}),
\item
$E_{6(-26)}/F_4$ (type EIV, with root system of type $A_2$ and one even root multiplicity $m=8$; see \cite{Str05}),
\item
$G_{2(-14)}/(\SU(2)\times \SU(2))$ (type G, with root system of type $G_2$ and one root multiplicity
$m=1$).
\end{enumerate}
\end{rem}

\subsection{The Plancherel density of $\G/\K$}
For $\lambda \in \mathfrak a_\C^*$ and $\beta \in \Sigma$ we shall employ the notation
\begin{equation}\label{eq:la}
\lambda_\beta=\frac{\inner{\lambda}{\beta}}{\inner{\beta}{\beta}}\,.
\end{equation}
Observe that
\begin{eqnarray}
\label{eq:lambdabetahalf}
\lambda_{\beta/2}&=&2\lambda_{\beta}\,,\\
\label{eq:lambdabeta12}
\lambda_{(\beta_2\pm \beta_1)/2}&=&
2  \frac{\inner{\lambda}{\beta_2}\pm \inner{\lambda}{\beta_1}}
{\inner{\beta_2}{\beta_2}+ \inner{\beta_1}{\beta_1}}=\lambda_{\beta_2}\pm \lambda_{\beta_1}\,.
\end{eqnarray}
For $\beta \in \Sigma_*$, we set
\begin{equation}
\label{eq:wtrhob}
\wt\rho_\beta=\frac{1}{2} \left( m_{\beta}+ \frac{m_{\beta/2}}{2}\right)\,,
\end{equation}
where $m_\beta$ denotes the multiplicity of the root $\beta$, and
\begin{equation}
\label{eq:cbeta}
c_\beta(\lambda)= \frac{2^{-2\lambda_\beta}\Gamma(2\lambda_\beta)}{\Gamma\big(\lambda_\beta+\frac{m_{\beta/2}}{4}+\frac{1}{2}\big) \Gamma\big(\lambda_\beta+\wt\rho_\beta\big)}\,.
\end{equation}
Observe that $\wt\rho_\beta=\rho_\beta=\frac{\langle \rho,\beta\rangle}{\langle\beta,\beta\rangle}$ if $\beta$ is a simple root in $\Sigma_*$ (but not in general).
In particular, $\wt\rho_{\beta_1}=\rho_{\beta_1}$.

Harish-Chandra's $c$-function $\cHC$ (written in terms of unmultipliable
instead of indivisible roots) is defined by 
\begin{equation}
 \label{eq:c}
\cHC(\lambda)=c_0 \; \prod_{\beta\in\Sigma_*^+} c_\beta(\lambda)\,.
\end{equation}
where $c_0$ is a normalizing constants so that $\cHC(\rho)=1$.

In the following we always adopt the convention that empty products are equal to $1$.
As a consequence of the properties of the gamma function, we have the following explicit expression.

\begin{lem}
\label{lemma:Planch-density}
The Plancherel density is given by the formula
\begin{equation}
\label{eq:Planch-density}
[\cHC(\lambda)\cHC(-\lambda)]^{-1}=C \Pi(\lambda) P(\lambda) Q(\lambda)\,,
\end{equation}
where
\begin{eqnarray}
\label{eq:Pi}
\Pi(\lambda)&=&\prod_{\beta \in \Sigma_*^+} \lambda_\beta\,,\\
\label{eq:P}
P(\lambda)&=&
\prod_{\beta \in \Sigma_*^+} \Big( \prod_{k=0}^{(m_{\beta/2})/2-1} \big[ \lambda_\beta -\big( \tfrac{m_{\beta/2}}{4}-\tfrac{1}{2} \big)+k\big]
\prod_{k=0}^{2 \wt \rho_\beta-2} [ \lambda_\beta -(\wt \rho_\beta-1)+k]\Big)\,, \\
\label{eq:Q}
Q(\lambda)&=&\prod_{\stackrel{\beta\in \Sigma_*^+}{\textup{$m_\beta$ odd}}}
 \cot(\pi(\lambda_\beta - \wt\rho_\beta))\,.
\end{eqnarray}
and $C$ is a constant.
Consequently, the singularities of the Plancherel density $[\cHC(\lambda)\cHC(-\lambda)]^{-1}$  are at most simple poles located along the hyperplanes of the equation
\begin{equation*}
\pm \lambda_\beta=\wt\rho_\beta +k
\end{equation*}
where $\beta \in \Sigma_*^+$ has odd multiplicity $m_\beta$, and  $k \in \Zb_{\geq 0}$.
\end{lem}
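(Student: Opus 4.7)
The plan is to start from the product definition (\ref{eq:c}) and decompose
\[
[\cHC(\lambda)\cHC(-\lambda)]^{-1} = c_0^{-2}\prod_{\beta\in\Sigma_*^+}\frac{\Gamma\bigl(\lambda_\beta+\tfrac{m_{\beta/2}}{4}+\tfrac12\bigr)\Gamma\bigl(-\lambda_\beta+\tfrac{m_{\beta/2}}{4}+\tfrac12\bigr)\Gamma(\lambda_\beta+\wt\rho_\beta)\Gamma(-\lambda_\beta+\wt\rho_\beta)}{\Gamma(2\lambda_\beta)\Gamma(-2\lambda_\beta)}
\]
(the factors $2^{\mp 2\lambda_\beta}$ from $c_\beta(\pm\lambda)$ having cancelled), and then simplify each $\beta$-factor using three classical gamma-function identities: Legendre's duplication $\Gamma(2z)=2^{2z-1}\pi^{-1/2}\Gamma(z)\Gamma(z+\tfrac12)$, the recursion $\Gamma(z+1)=z\Gamma(z)$, and Euler's reflection $\Gamma(z)\Gamma(1-z)=\pi/\sin\pi z$.

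Duplication followed by reflection applied to the denominator yields
\[
\bigl[\Gamma(2\lambda_\beta)\Gamma(-2\lambda_\beta)\bigr]^{-1} = -\frac{2\lambda_\beta\sin(2\pi\lambda_\beta)}{\pi}\,,
\]
whose $\lambda_\beta$ will feed into $\Pi(\lambda)$. For each numerator pair $\Gamma(\pm\lambda_\beta+a)\Gamma(\mp\lambda_\beta+a)$ with $a\in\{\wt\rho_\beta,\tfrac{m_{\beta/2}}{4}+\tfrac12\}$, I would peel off integer shifts down to the residual pair of fractional shift $s=a-\lfloor a\rfloor\in\{0,\tfrac12\}$ and apply reflection once more. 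The outcome, for integers $N\geq 1$ (resp.\ $N\geq 0$), is
\[
\Gamma(\lambda_\beta+N)\Gamma(-\lambda_\beta+N) = \frac{\pi\lambda_\beta}{\sin\pi\lambda_\beta}\prod_{k=1}^{N-1}(k^2-\lambda_\beta^2)\qquad (s=0),
\]
\[
\Gamma(\lambda_\beta+N+\tfrac12)\Gamma(-\lambda_\beta+N+\tfrac12) = \frac{\pi}{\cos\pi\lambda_\beta}\prod_{k=0}^{N-1}\bigl((k+\tfrac12)^2-\lambda_\beta^2\bigr)\qquad (s=\tfrac12),
\]
and a direct inspection of root-sets identifies the resulting polynomials, up to overall signs absorbed into $C$, with the corresponding $\beta$-contributions to $P(\lambda)$ in (\ref{eq:P}).

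A short parity computation shows that the fractional parts $s_1$ of $\wt\rho_\beta$ and $s_2$ of $\tfrac{m_{\beta/2}}{4}+\tfrac12$ coincide if and only if $m_\beta$ is odd. When $s_1\neq s_2$ (i.e.\ $m_\beta$ even), the residual $\sin\pi\lambda_\beta$ and $\cos\pi\lambda_\beta$ from the two pairs cancel $\sin(2\pi\lambda_\beta)=2\sin\pi\lambda_\beta\cos\pi\lambda_\beta$ from the denominator completely and no trigonometric factor survives. When $s_1=s_2=0$, one has $\wt\rho_\beta\in\Zb$ and the surviving factor is $\cot\pi\lambda_\beta$, which by $\pi$-periodicity equals $\cot(\pi(\lambda_\beta-\wt\rho_\beta))$; when $s_1=s_2=\tfrac12$, one has $\wt\rho_\beta\in\Zb+\tfrac12$ and the surviving factor is $-\tan\pi\lambda_\beta$, which by the same periodicity equals $\cot(\pi(\lambda_\beta-\wt\rho_\beta))$. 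Collecting all $\beta$-factors gives (\ref{eq:Planch-density}).

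The consequence on singularities is then immediate: $\Pi$ and $P$ are polynomials, hence entire, so all singularities of $[\cHC(\lambda)\cHC(-\lambda)]^{-1}$ come from $Q$. Each cotangent in $Q$ has simple poles along $\lambda_\beta=\wt\rho_\beta+k$ for $k\in\Zb$; the zeros of the corresponding second factor of $P$ at $\lambda_\beta\in\{-\wt\rho_\beta+1,\ldots,\wt\rho_\beta-1\}$ cancel exactly the cotangent poles with $k\in\leftFpar -(2\wt\rho_\beta-1),-1\rightFpar$, leaving only $\lambda_\beta=\wt\rho_\beta+k$ with $k\in\Zb_{\geq 0}$; combined with the $\lambda\leftrightarrow-\lambda$ symmetry of the Plancherel density, this gives the reflected family $-\lambda_\beta=\wt\rho_\beta+k$ as well. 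The main obstacle in the above is the parity bookkeeping that distinguishes odd from even $m_\beta$ across the two gamma pairs, but this reduces to a finite case analysis in $(m_\beta\bmod 2,\, m_{\beta/2}\bmod 4)$ and each sub-case is routine.
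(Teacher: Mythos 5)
Your proposal is correct and follows the same route the paper intends: the paper asserts the factorization as ``a consequence of the properties of the gamma function'' and its written proof consists only of the cancellation remark for the cotangent poles, which you reproduce. Your gamma-function computation (duplication/recursion/reflection, with the parity observation that the fractional parts of $\wt\rho_\beta$ and $\tfrac{m_{\beta/2}}{4}+\tfrac12$ agree exactly when $m_\beta$ is odd --- equivalently, $\wt\rho_\beta-\tfrac{m_{\beta/2}}{4}-\tfrac12=\tfrac{m_\beta-1}{2}\in\Zb$) correctly fills in the details the paper leaves implicit.
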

\begin{proof}
The singularities of $[\cHC(\lambda)\cHC(-\lambda)]^{-1}$ are those of $\cot(\pi(\lambda_\beta-\wt\rho_\beta))$,
for $\beta \in \Sigma_*^+$ with $m_ \beta$ odd, which are not killed by zeros of the polynomial
$\Pi(\lambda) P(\lambda)$.
\end{proof}

The following corollary will allow us to establish a region of holomorphic extension of the
resolvent.

\begin{cor}
Set
\begin{equation}
\label{eq:L}
L=\min\{\wt\rho_\beta |\beta|: \text{$\beta \in \Sigma_*^+$, $m_\beta$ odd} \}\,.
\end{equation}
Then, for every fixed $\omega \in \mathfrak{a}^*$ with $|\omega|=1$,  the function
$r \mapsto [\cHC(r\omega)\cHC(-r\omega)]^{-1}$ is holomorphic on $\C \setminus  \big(]-\infty,-L] \cup [L,+\infty[ \big)$.
\end{cor}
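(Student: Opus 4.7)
The plan is to combine the explicit pole description from Lemma~\ref{lemma:Planch-density} with a one-line Cauchy--Schwarz estimate, after restricting to the complex line $\{r\omega : r\in\C\}$. By the lemma, the only possible singularities of $\lambda\mapsto[\cHC(\lambda)\cHC(-\lambda)]^{-1}$ lie on the (finite) collection of complex affine hyperplanes
\[
H_{\beta,k}^{\pm}=\{\lambda\in\mathfrak a_\C^*: \pm\lambda_\beta=\wt\rho_\beta+k\},
\]
with $\beta\in\Sigma_*^+$ having odd multiplicity $m_\beta$ and $k\in\Zb_{\geq 0}$. Hence it suffices to intersect each $H_{\beta,k}^{\pm}$ with the line $\lambda=r\omega$ and to show every intersection point satisfies $|r|\geq L$.

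Substituting $\lambda=r\omega$ into the defining equation gives $r\omega_\beta = \pm(\wt\rho_\beta+k)$. Since $\omega\in\mathfrak a^*$ is real, $\omega_\beta=\inner{\omega}{\beta}/\inner{\beta}{\beta}$ is real. Two cases therefore arise. If $\omega_\beta=0$, the equation has no solution (as $\wt\rho_\beta+k>0$), so the line misses $H_{\beta,k}^{\pm}$. If $\omega_\beta\neq 0$, there is a unique intersection point $r=\pm(\wt\rho_\beta+k)/\omega_\beta$, which is real.

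It remains to bound $|r|$ from below in the second case. Cauchy--Schwarz together with $|\omega|=1$ yields
\[
|\omega_\beta|=\frac{|\inner{\omega}{\beta}|}{|\beta|^2}\leq \frac{1}{|\beta|},
\]
so
\[
|r|=\frac{\wt\rho_\beta+k}{|\omega_\beta|}\geq (\wt\rho_\beta+k)|\beta|\geq \wt\rho_\beta|\beta|\geq L
\]
by the definition of $L$ in (\ref{eq:L}). Consequently every possible pole of $r\mapsto[\cHC(r\omega)\cHC(-r\omega)]^{-1}$ is a real number of absolute value at least $L$, so the singular set of this function of $r$ is contained in $]-\infty,-L]\cup[L,+\infty[$, and the function is holomorphic on the complement.

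There is no real obstacle here: once Lemma~\ref{lemma:Planch-density} has pinned down the poles to the hyperplanes $H_{\beta,k}^{\pm}$, the corollary reduces to elementary linear algebra and the Cauchy--Schwarz inequality. The only point that deserves a brief mention in the write-up is the realness of $\omega_\beta$, which guarantees that the intersections occur on the real axis and not somewhere in the complex plane off the real line.
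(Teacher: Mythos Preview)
Your argument is correct and is exactly the natural way to deduce the corollary from Lemma~\ref{lemma:Planch-density}: restrict to the line $\lambda=r\omega$, observe that $\omega_\beta$ is real so any pole lies on the real axis, and use Cauchy--Schwarz with $|\omega|=1$ to get $|r|\geq\wt\rho_\beta|\beta|\geq L$. The paper states the corollary without proof, as an immediate consequence of the lemma; your write-up simply makes the intended one-line computation explicit.
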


The values of $\wt\rho_\beta$ for the roots in $\Sigma_*^+$, as well as the value of
$L$, are given in Table 2. 
Recall that 
$b=\langle\beta_1,\beta_1\rangle=\langle\beta_2,\beta_2\rangle$.
\smallskip
\begin{table}[!ht]
\setlength{\extrarowheight}{.2em}
\begin{adjustwidth}{-1.5cm}{}
\begin{tabular}{|c||c|c|c|c|c|}
\hline
$\G$ & $\SU(p,2)$ $(p>2)$ &$\SO_0(p,2)\;  (p>2)$  & $\Sp(p,2)\;  (p\geq 2)$
& $\SO^*(10)$ & $E_{6(-14)}$
 \\[.2em]
\hline\hline
$\wt \rho_{\beta_j}=\frac{1}{2}\big(m_\rl+\frac{m_\rs}{2}\big)$  &
$(p-1)/2$ & $1/2$ & $p-1/2$ & $3/2$ & $5/2$ \\[.2em]
\hline
$\wt \rho_{(\beta_2\pm \beta_1)/2}=\frac{m_\mrm}{2}$ &
1 & $(p-2)/2$ & $2$ & $2$ & $3$ \\[.2em]
\hline
\footnotesize{$\Sigma^+_{*,{\rm odd}}=\{\beta\in \Sigma_*^+:\text{$m_\beta$ odd}\}$ } &
$\{\beta_1,\beta_2\}$ &
\begin{tabular}{l}
\footnotesize{$p$ even:  $\{\beta_1,\beta_2\}$} \\
\footnotesize{$p$ odd: $\{\beta_1,\beta_2, \frac{\beta_2\pm \beta_1}{2}\}$}
\end{tabular}
& $\{\beta_1,\beta_2\}$ & $\{\beta_1,\beta_2\}$ & $\{\beta_1,\beta_2\}$ \\[.2em]
\hline
{\footnotesize $L=\min\{\wt\rho_\beta |\beta|:
\text{$\beta \in \Sigma^+_{*,{\rm odd}}$}\} $} & $\frac{\sqrt{p-1}}{2}b$ &
\begin{tabular}{l}
\footnotesize{$p=3$: $\frac{\sqrt{2}}{4}b$} \\
\footnotesize{$p> 3$: $\frac{b}{2}$}
\end{tabular}   & $\big(\frac{3}{2}+2(p-2)\big) b$ & $\frac{3}{2}b$ & $\frac{5}{2}b$ \\[.2em]
\hline
\end{tabular}
\medskip
\caption{The values of $\wt \rho_\beta$ for $\beta \in \Sigma_*^+$ and of $L$}\label{table tilde rho}
\end{adjustwidth}
\end{table}

A computation using the values in the tables together with \cite[\S 2]{HPP15} yields the following corollary.

\begin{cor}\label{Cor:Plancherel density}
If $\G\neq \SO_0(p,2)$ with $p$ odd, then $\{\beta\in \Sigma_*^+: \text{$m_\beta$ is odd}\}$
is equal to $\{\beta_1,\beta_2\}$. Hence
\begin{equation}
\label{eq:cHCandproducts}
[\cHC(\lambda)\cHC(-\lambda)]^{-1}=
\Pi_0(\lambda)P_0(\lambda)[\cHC^\times(\lambda)\cHC^\times(-\lambda)]^{-1}\,,
\end{equation}
where
\begin{eqnarray}
\label{eq:Pi0}
\Pi_0(\lambda)&=&\lambda_{(\beta_2-\beta_1)/2} \lambda_{(\beta_2+\beta_1)/2}
=\lambda_{\beta_2}^2-\lambda_{\beta_1}^2\,,\\
\label{eq:P0}
P_0(\lambda)&=&
\prod_{\beta=(\beta_2\pm \beta_1)/2}
\prod_{k=0}^{2 \wt \rho_\beta-2} [ \lambda_\beta -(\wt \rho_\beta-1)+k]\,,
\end{eqnarray}
and
$[\cHC^\times(\lambda)\cHC^\times(-\lambda)]^{-1}$ is the Plancherel density of the product
$\X_1\times \X_1$ of two isomorphic rank-one Riemannian symmetric spaces with root systems of type $BC_1$ (or $A_1$) and multiplicities $(m_{\beta_j},m_{\beta_j/2})=(m_\rl,m_\rs)$.

If $\G=\SO_0(p,2)$ with $p\geq 3$ odd, then $\Sigma^+_*=\Sigma^+$ and
\begin{eqnarray*}
\Pi(\lambda)&=&\lambda_{\beta_1} \lambda_{\beta_2} (\lambda_{\beta_2}^2-\lambda_{\beta_1}^2)\\
P(\lambda)&=&\prod_{k=0}^{p-2} \Big( \lambda_{(\beta_2-\beta_1)/2}-\big(\frac{p-2}{2}-1\big)+k\Big)\Big( \lambda_{(\beta_2+\beta_1)/2}-\big(\frac{p-2}{2}-1\big)+k\Big)\\
&=&\prod_{k=0}^{p-2} \Big( \lambda_{\beta_2}-\lambda_{\beta_1}-\big(\frac{p-2}{2}-1\big)+k\Big)\Big(  \lambda_{\beta_2}+\lambda_{\beta_1}-\big(\frac{p-2}{2}-1\big)+k\Big)\\
Q(\lambda)&=&\cot\big(\pi \big( \lambda_{\beta_1}-\tfrac{1}{2}\big)\big)
\cot\big(\pi \big( \lambda_{\beta_2}-\tfrac{1}{2}\big)\big)
\cot\big(\pi \big(  \lambda_{\beta_2}-\lambda_{\beta_1}-\tfrac{p}{2}+1\big)\big)\\
&&\hskip 7truecm \times \cot\big(\pi \big( \lambda_{\beta_2}+\lambda_{\beta_1}-\tfrac{p}{2}+1\big)\big)
\end{eqnarray*}
\end{cor}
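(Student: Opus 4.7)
My plan is to reduce the corollary to elementary bookkeeping from Table~1 and to direct substitution into the general factorization of Lemma~\ref{lemma:Planch-density}. No new conceptual input is required.

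First, I would verify the root-multiplicity statement case by case from Table~1. In each of the five types the long-root multiplicity satisfies $m_\rl\in\{1,3\}$ and is therefore always odd, so $\beta_1$ and $\beta_2$ always belong to $\{\beta\in\Sigma_*^+:m_\beta\text{ odd}\}$. For the middle roots $(\beta_2\pm\beta_1)/2$, the multiplicity $m_\mrm$ equals $2$, $p-2$, $4$, $4$, $6$ in the cases AIII, BDI, CII, DIII, EIII respectively; it is odd only for BDI with $p$ odd. This establishes both that $\{\beta\in\Sigma_*^+:m_\beta\text{ odd}\}=\{\beta_1,\beta_2\}$ when $\G\neq\SO_0(p,2)$ with $p$ odd, and that $\Sigma_*^+=\Sigma^+$ for $\G=\SO_0(p,2)$ with $p$ odd (the latter being immediate from $m_\rs=0$).

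For the first case I would split the factors $\Pi$, $P$, $Q$ of Lemma~\ref{lemma:Planch-density} according to whether the indexing root is long or middle. Formula (\ref{eq:lambdabeta12}) gives $\Pi(\lambda)=\lambda_{\beta_1}\lambda_{\beta_2}\,\Pi_0(\lambda)$ with $\Pi_0$ as in (\ref{eq:Pi0}), and similarly $P(\lambda)=P_*(\lambda)P_0(\lambda)$, where $P_*$ and $P_0$ collect the long-root and middle-root contributions, with $P_0$ matching (\ref{eq:P0}). By the first step, $Q(\lambda)=\prod_{j=1,2}\cot\bigl(\pi(\lambda_{\beta_j}-\wt\rho_{\beta_j})\bigr)$. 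The remaining task is to recognize $\lambda_{\beta_1}\lambda_{\beta_2}\,P_*(\lambda)\,Q(\lambda)$ as the Plancherel density $[\cHC^\times(\lambda)\cHC^\times(-\lambda)]^{-1}$ of $\X_1\times\X_1$. This follows from Lemma~\ref{lemma:Planch-density} applied to each rank-one factor, whose unmultipliable root carries multiplicities $(m_\rl,m_\rs)$, together with the multiplicativity of the Plancherel density over a direct product, as recorded in \cite[\S 2]{HPP15}.

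For the second case all four roots of $\Sigma_*^+$ carry odd multiplicities, so $Q(\lambda)$ is a product of four cotangent factors. I would substitute the values $\wt\rho_{\beta_j}=1/2$ and $\wt\rho_{(\beta_2\pm\beta_1)/2}=(p-2)/2$ from Table~\ref{table tilde rho} into (\ref{eq:Pi}), (\ref{eq:P}), (\ref{eq:Q}), and rewrite $\lambda_{(\beta_2\pm\beta_1)/2}=\lambda_{\beta_2}\pm\lambda_{\beta_1}$ via (\ref{eq:lambdabeta12}) to obtain the stated expressions. The long-root contributions to $P(\lambda)$ vanish because both inner products in (\ref{eq:P}) are empty (one since $m_{\beta_j/2}=0$, the other since $2\wt\rho_{\beta_j}-2<0$), so only the middle-root polynomial factors survive. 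Since the whole argument amounts to book-keeping, there is no serious obstacle; the one point that deserves care is matching normalizing constants and indexing the two rank-one factors of $\X_1\times\X_1$ naturally by $\beta_1$ and $\beta_2$ in the first case, which is straightforward given that $\beta_1\perp\beta_2$ and both carry the same multiplicity pair $(m_\rl,m_\rs)$.
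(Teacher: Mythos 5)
Your proposal is correct and coincides with the paper's (implicit) argument: the paper offers no proof beyond the remark that the corollary follows by ``a computation using the values in the tables together with \cite[\S 2]{HPP15}'', and your case-by-case parity check of $m_\rl$ and $m_\mrm$ plus the splitting of $\Pi$, $P$, $Q$ into long-root and middle-root factors is exactly that computation. One small caveat: if you actually carry out the substitution in the second case, the middle-root product in Lemma~\ref{lemma:Planch-density} runs over $k=0,\dots,2\wt\rho_\beta-2=p-4$, so you would obtain $\prod_{k=0}^{p-4}$ rather than the $\prod_{k=0}^{p-2}$ printed in the corollary (which is also the only range symmetric about $\lambda_\beta$, as the density must be); this is a typo in the paper's statement, not a flaw in your argument.
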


\subsection{The resolvent of $\Delta$}

Endow the Euclidean space $\mathfrak{a}^*$ with the Lebesgue measure normalized so that the unit cube has volume $1$. On the Furstenberg boundary $\B=\K/\M$ of $\X$, where $\M$ is the centralizer of $\mathfrak{a}$ in $\K$, we consider the $\K$-invariant measure $db$ normalized so that the volume of $\B$ is equal to $1$.
Let $\X$  be equipped with its (suitably normalized) natural $\G$-invariant Riemannian measure
and let $\Delta$ denote the corresponding (positive) Laplacian. 
As in the cases treated in \cite{HPP14} and \cite{HPP15}, it will be convenient to identify $\mathfrak{a}^*$ with $\C$ as vector spaces over $\R$. More precisely, we want to view $\mathfrak{a}_1^*$ and $\mathfrak{a}_2^*$ as the real and the purely imaginary axes, respectively. To distinguish the resulting complex structure in $\mathfrak{a}^*$ from the natural complex structure of $\mathfrak{a}^*_\C$, we shall indicate the complex units in $\mathfrak{a}^*\equiv \C$ and $\mathfrak{a}_\C^*$ by $i$ and $\bi$, respectively.
So $\mathfrak{a}^*\equiv \C=\R +i \R$, whereas $\mathfrak{a}_\C^*=\mathfrak{a}^*+\bi\mathfrak{a}^*$. For $r,s\in \R$ and $\lambda,\nu\in \mathfrak{a}^*$ we have $(r+is)(\lambda+\bi\nu)=(r\lambda-s\nu)+\bi(r\nu+s\lambda)\in \mathfrak{a}^*_\C$.

By the Plancherel Theorem \cite[Ch. III, \S 1, no. 2]{He3}, the Helgason-Fourier transform
$\mathcal F$ is a unitary equivalence of $\Delta$ acting on $L^2(\X)$ with the
multiplication operator $M$ on $L^2(\mathfrak{a}^*_+\times B,[\cHC(\bi \lambda)\cHC(-\bi \lambda)]^{-1} \; d\lambda\, db)$ given by
\begin{equation} \label{eq:Mmult}
MF(\lambda,b)=\Gamma(\Delta)(\bi\lambda)F(\lambda,b)=(\inner{\rho}{\rho}+\inner{\lambda}{\lambda})F(\lambda,b)  \qquad ((\lambda,b)\in \mathfrak{a}^*\times B)\,.
\end{equation}
It follows, in particular, that the spectrum of $\Delta$ is the half-line
$[\rho_\X^2, +\infty[$, where $\rho_\X^2=\inner{\rho}{\rho}$.
By the Paley-Wiener theorem for $\mathcal{F}$, see e.g. \cite[Ch. III, \S 5]{He3}, for every
$u \in \C \setminus [\rho_\X^2, +\infty[$ the resolvent of $\Delta$ at $u$ maps $C^\infty_c(\X)$ into $C^\infty(\X)$.

Recall that for sufficiently regular functions $f_1, f_2:\X \to \C$, the convolution $f_1\times f_2$ is the function on $\X$ defined by $(f_1 \times f_2) \circ \pi= (f_1 \circ \pi) * (f_2 \circ \pi)$. Here $\pi:\G\to \X=\G/\K$ is the natural projection and $*$ denotes the convolution product of functions on $\G$.

The Plancherel formula yields the following explicit expression for the image of $f \in C^\infty_c(\X)$ under the resolvent operator $R(z)=(\Delta-\rho_\X^2-z^2)^{-1}$ of the shifted Laplacian $\Delta - \rho_\X^2$:
\begin{equation}\label{eq:resolvent}
[R(z)f](y)=\int_{\mathfrak{a}^*} \frac{1}{\inner{\lambda}{\lambda}-z^2}\; (f \times \varphi_{\bi\lambda})(y) \; \frac{d\lambda}{\cHC(\bi\lambda)\cHC(-\bi\lambda)}
\qquad (z \in \C^+\,, y\in \X)\,.
\end{equation}
See \cite[formula (14)]{HP09}. Here and in the following, resolvent equalities as \eqref{eq:resolvent} are given up to non-zero constant multiples.

\section{Meromorphic extension in the case $\G\neq \SO_0(2,p)$, $p>2$ odd}

\subsection{The resolvent kernel in polar coordinates}
We write $\mathfrak{a}_1^*=\R \beta_1$ and $\mathfrak{a}_2^*=\R \beta_2$, so that $\mathfrak{a}^*=\mathfrak{a}_1^*\oplus\mathfrak{a}^*_2$ and
$\lambda=\lambda_1+\lambda_2=x_1\beta_1+x_2\beta_2\in \mathfrak{a}^*$. 
Introduce the coordinates
\begin{equation}\label{1.1}
\R^2\ni (x_1,x_2)\to x_1\beta_1+x_2\beta_2\in \a_1^*\oplus\a_2^*=\a^*.
\end{equation}
Hence $x_j=\lambda_{\beta_j}$ if $\lambda=x_1\beta_1+x_2\beta_2$.

In view of Table~\ref{table tilde rho}, the functions $\Pi_0$ and $P_0$ from \eqref{eq:Pi0} and \eqref{eq:P0} can be rewritten in these coordinates, as
\begin{eqnarray}
\label{eq:Pi0'}
\Pi_0(\lambda)&=&\Pi_0(x_1\beta_1+x_2\beta_2)\ =\ x_2^2-x_1^2\,,\\
\label{eq:P0'}
P_0(\lambda)&=&\nonumber P_0(x_1\beta_1+x_2\beta_2)
\ =\ \prod_{\beta=(\beta_2\pm \beta_1)/2} \prod_{k=0}^{m_\mrm-2} \big[ \lambda_\beta -\big(\frac{m_\mrm}{2}-1\big)+k\big]\\
&=&\nonumber \prod_{k=1}^{m_\mrm-1} \big[ (x_2+x_1)-\frac{m_\mrm}{2}+k\big]\,\big[ (x_2-x_1)-\frac{m_\mrm}{2}+k\big]\\
&=&\prod_{k=1}^{m_\mrm-1} \big[ (x_2-\frac{m_\mrm}{2}+k)^2-x_1^2\big]
\end{eqnarray}
since
$$
(\beta_j)_{(\beta_2\pm\beta_1)/2}=(\beta_j)_{\beta_2}\pm (\beta_j)_{\beta_1}=
\begin{cases}
 \pm 1 &\text{for } j=1,\\
 1 &\text{for } j=2.\\
\end{cases}
$$
We write
\begin{equation}\label{eq:vartheta0}
\vartheta_0(x_1,x_2)= \Pi_0(\lambda)P_0(\lambda) = (x_2^2-x_1^2)\prod_{k=1}^{m_\mrm-1} \big[ (x_2-\frac{m_\mrm}{2}+k)^2-x_1^2\big]
\end{equation}
Further we write
\begin{equation}\label{eq:Rang 1 c-function}
 [\cHC^{\X_1}(\bi \lambda)\cHC^{\X_1}(-\bi \lambda)]^{-1} = C_1 \Pi_1(\bi \lambda) P_1(\bi \lambda) Q_1(\bi \lambda)
\end{equation}
for the Plancherel density of the space $\X_1$ in Corollary~\ref{Cor:Plancherel density}, so that
\begin{equation}\label{eq:Rang 1 c-function}
 [\cHC^\times(\bi\lambda)\cHC^\times(-\bi\lambda)]^{-1} = C_1^2 \Pi_1(i x_1)\Pi_1(i x_2) P_1(i x_1)P_1(i x_2) Q_1(i x_1)Q_1(i x_2).
\end{equation}
See \cite[\S 1 and \S 2]{HPP15}. 
Using \eqref{eq:cHCandproducts} and omitting non-zero constant multiples, we can therefore rewrite \eqref{eq:resolvent} as
\begin{multline*}
R(z)f(y)
=\int_{\a^*}\frac{1}{\langle \lambda,\lambda\rangle-z^2}(f\times \varphi_{\bi\lambda})(y)\frac{1}{\cHC(\bi\lambda)\cHC(-\bi\lambda)}\,d\lambda\\
=\int_{\R^2}\frac{(f\times \varphi_{\bi x_1\beta_1+\bi x_2\beta_2})(y)}{x_1^2b^2+x_2^2b^2-z^2} \vartheta_0(ix_1,ix_2)x_1x_2P_1(ix_1)P_1(ix_2)Q_1(ix_1)Q_1(ix_2)\,dx_1\,dx_2\,.
\end{multline*}
Introduce the polar coordinates
\[
x_1=\frac{r}{b}\cos \theta,\ \ \ x_2=\frac{r}{b}\sin \theta \qquad (0<r\,,\ 0\leq \theta<2\pi)
\]
on $\R^2$ and set
\begin{equation}
\label{eq:pq}
 p_1(x)=P_1\big(i\tfrac{x}{b}\big)\qquad \text{and} \qquad q_1(x)=Q_1\big(i\tfrac{x}{b}\big).
\end{equation}
In these terms (up to a non-zero constant multiple)
\[
R(z)f(y)=\int_0^\infty\frac{1}{r^2-z^2}F(r)\,r\,dr,
\]
where
\begin{eqnarray}\label{2.1}
F(r)&=&\int_0^{2\pi}(f\times \varphi_{\bi\frac{r}{b}\cos \theta\,\beta_1+\bi\frac{r}{b}\sin \theta\,\beta_2})(y)\,\vartheta_{0,{\rm pol}}(r,\theta)r^2\cos\theta \sin\theta\\
&&\times \; p_1(r\cos \theta) q_1(r\cos \theta) p_1(r\sin \theta) q_1(r\sin \theta)\,d\theta\,,\nn
\end{eqnarray}
where
$$\vartheta_{0,{\rm pol}}(r,\theta)= \vartheta_0(i\,x_1,i\,x_2) = -\frac{r^2}{b^2}(\sin^2 \theta -\cos^2 \theta) \prod_{k=1}^{m_\mrm-1} \big[ \big(\frac{r}{b}i\sin \theta-\frac{m_\mrm}{2}+k\big)^2+\frac{r^2}{b^2}\cos^2 \theta\big].$$
Here and in the following, we omit from the notation the dependence of $F$ on the function $f\in C^\infty_c(\X)$ and on $y\in \X$.

Recall the functions
\begin{equation}
\label{eq:c-s}
\cz(w)=\frac{w+w^{-1}}{2}\,,\qquad  \sz(w)=\frac{w-w^{-1}}{2}=i\cz(-iw) \qquad (w\in\C^\times)
\end{equation}
from \cite[(20)]{HPP15} and notice that
\[
\cos\theta=\cz(e^{i\theta})\,, \qquad \sin\theta=\frac{\sz(e^{i\theta})}{i}=\cz(-ie^{i\theta})\,,\qquad d\theta=\frac{d e^{i\theta}}{ie^{i\theta}}.
\]
For $z \in \C$ and $w \in \C^\times$ define
\begin{eqnarray}
\label{eq:psiz}
\psi_z(w)&=&(f\times \varphi_{\bi\frac{z}{b}\cz(w)\,\beta_1+\bi\frac{z}{b}\cz(-iw)\,\beta_2})(y)\\
\label{eq:phiz}
\phi_z(w)&=&-z^2\cz(w) \frac{\sz(w)}{w} p_1\big(z\cz(w)\big) q_1\big(z\cz(w)\big) p_1\big(z\cz(-iw)\big) q_1\big(z \cz(-iw)\big).
\end{eqnarray}
as in \cite[(32) and (33)]{HPP15} together with
\begin{equation}\label{eq:vathetaz}
  \vartheta_z(w)=\frac{z^2}{b^2}\big(\cz(w)^2-\cz(-iw)^2\big)\prod_{k=1}^{m_\mrm-1} \big[ \big(\frac{z}{b}\sz(w)-\frac{m_\mrm}{2}+k\big)^2+\frac{z^2}{b^2}\cz(w)^2\big]\,,
\end{equation}
which is a polynomial function of $z$.
Then
\begin{equation}\label{F(r)}
F(r)=\int_{|w|=1}\vartheta_r(w)\psi_r(w)\phi_r(w)\,dw\,.
\end{equation}
\begin{lem}
\label{lemma:functions-z}
Let $z\in \C$ and $w\in \C^\times$. Then:
\medskip

\begin{center}
\begin{tabular}{llll}
\setlength{\extrarowheight}{.3em}
&$\psi_{-z}(w)=\psi_{z}(w)$, \qquad\qquad &$\psi_{z}(-w)=\psi_{z}(w)$, \qquad\qquad
&$\psi_{z}(iw)=\psi_{z}(w)$,\\[.2em]
&$\phi_{-z}(w)=\phi_{z}(w)$, \qquad &$\phi_{z}(-w)=-\phi_{z}(w)$, \qquad 
&$\phi_{z}(iw)=-i\phi_{z}(w)$,\\[.2em]
&$\vartheta_{-z}(w)=\vartheta_{z}(w)$, \qquad &$\vartheta_{z}(-w)=\vartheta_{z}(w)$, \qquad 
&$\vartheta_{z}(iw)=\vartheta_{z}(w)$.
\end{tabular}
\end{center}
\smallskip
\end{lem}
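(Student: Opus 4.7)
The plan is to verify each of the nine identities by direct substitution, relying on three ingredients. First, the elementary substitutions from \eqref{eq:c-s}: $\cz(-w)=-\cz(w)$, $\sz(-w)=-\sz(w)$, $\cz(iw)=i\sz(w)$, $\sz(iw)=i\cz(w)$, and hence $\cz(-iw)=-i\sz(w)$, $\sz(-iw)=-i\cz(w)$. Second, the Weyl group invariance $\varphi_{s\lambda}=\varphi_\lambda$ for $s\in\W$, where the Weyl group $\W$ of $BC_2$ or $C_2$ is the full hyperoctahedral group acting on the orthogonal basis $\{\beta_1,\beta_2\}$ by arbitrary sign changes and by the swap $\beta_1\leftrightarrow\beta_2$; in particular $-\mathrm{id}\in\W$. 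Third, the parity of the rank-one factors: since the Plancherel density $\Pi_1(\bi\lambda)P_1(\bi\lambda)Q_1(\bi\lambda)$ is even in $\lambda$ while $\Pi_1(\bi\lambda)=\bi\lambda_\beta$ is odd, the product $p_1(x)q_1(x)=P_1(\bi x/b)Q_1(\bi x/b)$ is an odd function of $x\in\R$.

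\emph{The identities for $\psi_z$.} The identity $\psi_{-z}(w)=\psi_z(w)$ is $\varphi_{-\mu}=\varphi_\mu$ applied to $\mu=\bi\tfrac{z}{b}\cz(w)\beta_1+\bi\tfrac{z}{b}\cz(-iw)\beta_2$. The identity $\psi_z(-w)=\psi_z(w)$ follows from $\cz(-w)=-\cz(w)$ and $\cz(-i(-w))=\cz(iw)=-\cz(-iw)$, which flip the signs of both coefficients, combined again with $\varphi_{-\mu}=\varphi_\mu$. Finally, $\psi_z(iw)=\psi_z(w)$ uses $\cz(iw)=i\sz(w)=-\cz(-iw)$ together with $\cz(-i\cdot iw)=\cz(w)$; this interchanges the roles of the $\beta_1$- and $\beta_2$-coefficients with one sign, and the composition of the swap $\beta_1\leftrightarrow\beta_2$ with a sign change on $\beta_2$ (both in $\W$) reproduces the original spherical-function parameter.

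\emph{The identities for $\phi_z$.} The identity $\phi_{-z}(w)=\phi_z(w)$ follows from $(-z)^2=z^2$ and the two sign flips from the $p_1q_1$-factors (by oddness) cancelling. The identity $\phi_z(-w)=-\phi_z(w)$ comes from the prefactor $\cz(w)\sz(w)/w$ being odd in $w$, since $\cz(-w)\sz(-w)/(-w)=-\cz(w)\sz(w)/w$, while the $p_1q_1$-contribution is unchanged: each of the two factors flips sign and the two flips cancel (using $\cz(iw)=-\cz(-iw)$ for the second one). Finally, $\phi_z(iw)=-i\phi_z(w)$ comes from the prefactor picking up a factor $i$, since $\cz(iw)\sz(iw)/(iw)=i\cz(w)\sz(w)/w$, together with exactly one net sign flip from the $p_1q_1$-factors: $z\cz(-i\cdot iw)=z\cz(w)$ is unchanged while $z\cz(iw)=iz\sz(w)=-z\cz(-iw)$ produces one sign flip by oddness of $p_1q_1$, yielding the claimed factor $i\cdot(-1)=-i$.

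\emph{The identities for $\vartheta_z$.} The leading factor $\cz(w)^2-\cz(-iw)^2=\cz(w)^2+\sz(w)^2$ is visibly invariant under $z\to -z$ and $w\to -w$, and via $\cz(iw)^2+\sz(iw)^2=-(\cz(w)^2+\sz(w)^2)$ it picks up the sign $-1$ under $w\to iw$. In the polynomial factor
\begin{equation*}
\prod_{k=1}^{m_\mrm-1}\Big[\Big(\tfrac{z}{b}\sz(w)-c_k\Big)^2+\tfrac{z^2}{b^2}\cz(w)^2\Big],\qquad c_k=\tfrac{m_\mrm}{2}-k,
\end{equation*}
the constants $c_k$ are symmetric around $0$, so reindexing $k\leftrightarrow m_\mrm-k$ gives invariance under both $z\to -z$ and $w\to -w$. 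For $w\to iw$, a direct expansion (setting $X=\tfrac{z}{b}\sz(w)$, $Y=\tfrac{z}{b}\cz(w)$ and using $\sz(iw)=i\cz(w)$, $\cz(iw)=i\sz(w)$, i.e.\ $(X,Y)\to(iY,iX)$) shows that for each $c\neq 0$ the paired product $[(X-c)^2+Y^2][(X+c)^2+Y^2]$ is invariant, while the unpaired factor $X^2+Y^2$ (corresponding to $c=0$) picks up $-1$. Inspection of Table~1 shows that in all the cases considered, i.e.\ with the exclusion of $\SO_0(p,2)$ for $p>2$ odd, the multiplicity $m_\mrm$ is even, so exactly one $c_k$ vanishes; the polynomial factor picks up the sign $-1$, which cancels the $-1$ from the leading factor and gives $\vartheta_z(iw)=\vartheta_z(w)$. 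The main obstacle is precisely this last parity count, and it is the only place where the exclusion of $\SO_0(p,2)$, $p>2$ odd, is essential: if $m_\mrm$ were odd then no $c_k$ would vanish, all pairs would contribute $+1$, and the identity would instead read $\vartheta_z(iw)=-\vartheta_z(w)$.
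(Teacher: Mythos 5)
Your proof is correct and follows essentially the same route as the paper: Weyl-group invariance of $\varphi_\lambda$ for the $\psi$ identities, oddness of $\cz$, $\sz$ and $p_1q_1$ for the $\phi$ identities, and the symmetry $c_k\mapsto -c_k$ plus the parity of $m_\mrm$ for $\vartheta$. The only (harmless) variation is in the $w\to iw$ step for $\vartheta_z$, where you check invariance of the paired quadratics $[(X-c)^2+Y^2][(X+c)^2+Y^2]$ under $(X,Y)\to(iY,iX)$ directly, while the paper splits each quadratic into linear factors and reindexes, extracting the sign $(-1)^{m_\mrm-1}$ — both hinge on $m_\mrm$ being even exactly as you say.
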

\begin{proof}
Set $\mu(z,w)=\bi \frac{z}{b}\cz(w) \beta_1 +\bi \frac{z}{b}\cz(-iw) \beta_2$, so that 
$\psi_z(w)=f\times \varphi_{\mu(z,w)}(y)$. Then $\mu(-z,w)$, $\mu(z,-w)$ and $\mu(z,iw)$
are transformed into $\mu(z,w)$ by sign changes and transposition of $\beta_1$ and of $\beta_2$.
The equalities for $\psi_z(w)$ then follow because the spherical function $\varphi_\lambda$ is 
$W$-invariant in the parameter $\lambda$.

The equalities for $\phi_{z}(w)$ are an immediate consequence of \eqref{eq:c-s} and the fact that the functions $\cz$, $\sz$ and $p_1q_1$ are odd. 

To prove the relations for $\vartheta_z(w)$, notice that 
$-\frac{m_\mrm}{2}+k = \frac{m_\mrm}{2}-h$ 
where $h=m_\mrm-k \in \{1,\dots,m_\mrm-1\}$ when $k \in \{1,\dots,m_\mrm-1\}$.
Hence 
$$
\prod_{k=1}^{m_\mrm-1} \Big[\big(-\frac{z}{b}\sz(w)-\frac{m_\mrm}{2}+k\big)^2+\frac{z^2}{b^2}\cz(w)^2 \Big] =
\prod_{h=1}^{m_\mrm-1}
\Big[ \big(\frac{z}{b}\sz(w)-\frac{m_\mrm}{2}+h\big)^2+\frac{z^2}{b^2}\cz(w)^2 \Big] \,.
$$
This proves the first two equalities for $\vartheta_z(w)$ since $\sz(-w)=-\sz(w)$. For the last equality, notice that
\begin{eqnarray*}
&&\big(\frac{z}{b}\sz(iw)-\frac{m_\mrm}{2}+k\big)^2+\frac{z^2}{b^2}\cz(iw)^2\\
&&\qquad =\big[\frac{z}{b}i\cz(w)-\frac{m_\mrm}{2}+k+i\frac{z}{b}i\sz(w)\big]
\big[\frac{z}{b}i\cz(w)-\frac{m_\mrm}{2}+k-i\frac{z}{b}i\sz(w)\big]\\
&&\qquad =\big[-\frac{z}{b}\sz(w)-\frac{m_\mrm}{2}+k+i\frac{z}{b}\cz(w)\big]
\big[\frac{z}{b}\sz(w)-\frac{m_\mrm}{2}+k-i\frac{z}{b}\cz(w)\big]\,.
\end{eqnarray*}
Hence, since $m_\mrm$ is even,
\begin{eqnarray*}
&&\prod_{k=1}^{m_\mrm-1}
\Big[ \big(\frac{z}{b}\sz(iw)-\frac{m_\mrm}{2}+k\big)^2+\frac{z^2}{b^2}\cz(iw)^2 \Big] \\
&&\qquad =
\prod_{k=1}^{m_\mrm-1}
\big[\frac{z}{b}\sz(w)-\frac{m_\mrm}{2}+k+i\frac{z}{b}\cz(w)\big] \cdot
 (-1)^{m_\mrm-1} \prod_{h=1}^{m_\mrm-1}
\big[\frac{z}{b}\sz(w)-\frac{m_\mrm}{2}+h-i\frac{z}{b}\cz(w)\big]\\
&&\qquad =
-\prod_{k=1}^{m_\mrm-1} \Big[ \big(\frac{z}{b}\sz(w)-\frac{m_\mrm}{2}+k\big)^2+\frac{z^2}{b^2}\cz(w)^2 \Big]\,. 
\end{eqnarray*}
This proves the claim because $\cz(w)^2-\cz(-iw)^2$ changes sign under the transformation
$w \to iw$.
\end{proof}

Thus  \cite[Lemma 3]{HPP15} generalizes as follows.
\begin{lem}
\label{lemma:holoextF}
The function $F(r)$, \eqref{F(r)}, extends holomorphically to
\begin{eqnarray}\label{2.2}
F(z)&=&\int_{|w|=1}\vartheta_z(w)\psi_z(w)\phi_z(w)\,dw\,,
\end{eqnarray}
where
\[
z\in \C\setminus i((-\infty,- L]\cup[L, +\infty))
\]
and $L$ is the constant defined in \eqref{eq:L}.
The function $F(z)$ is even and $F(z)z^{-2}$ is bounded near $z=0$.
\end{lem}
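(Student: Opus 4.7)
The plan is to show, in order, that the integral on the right-hand side of \eqref{2.2} (a) defines a holomorphic function on the domain $\Omega:=\C\setminus i\bigl((-\infty,-L]\cup[L,+\infty)\bigr)$, (b) agrees with $F(r)$ for small $r>0$, (c) is even in $z$, and (d) vanishes to order at least two at $z=0$.

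For (a), I analyze the three factors of the integrand separately, with $w$ fixed on the unit circle. The factor $\vartheta_z(w)$ is polynomial in $z$ and hence entire. The factor $\psi_z(w)=(f\times\varphi_{\bi(z/b)\cz(w)\,\beta_1+\bi(z/b)\cz(-iw)\,\beta_2})(y)$ is entire in $z$ because $\varphi_\lambda$ depends holomorphically on $\lambda\in\a^*_\C$ and $f\in C^\infty_c(\X)$. Only $\phi_z(w)$ is genuinely meromorphic, the poles arising from the cotangent factor $q_1$; in the variable $x$ these poles lie on the imaginary axis at $x=-ib(\wt\rho_{\beta_1}+k)$, $k\in\Zb$. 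Consequently the poles of $z\mapsto q_1(z\cz(w))$ and $z\mapsto q_1(z\cz(-iw))$ sit on the imaginary axis at distance $b\,|\wt\rho_{\beta_1}+k|/|\cz(w)|$ or $b\,|\wt\rho_{\beta_1}+k|/|\cz(-iw)|$ from $0$. Since $|\cz(w)|,|\cz(-iw)|\leq 1$ for $|w|=1$, and since Corollary~\ref{Cor:Plancherel density} together with Table~\ref{table tilde rho} gives $\Sigma^+_{*,{\rm odd}}=\{\beta_1,\beta_2\}$ with $|\beta_j|=b$ for the spaces considered here, all such distances are $\geq L$. Hence the integrand is holomorphic in $z\in\Omega$ and jointly continuous in $(z,w)$ on compact subsets of $\Omega\times\{|w|=1\}$, so $F$ is holomorphic on $\Omega$ by differentiation under the integral sign.

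For (b), substituting $w=e^{i\theta}$ and using $\cos\theta=\cz(e^{i\theta})$, $\sin\theta=\cz(-ie^{i\theta})$, and $dw=iw\,d\theta$, the definitions \eqref{eq:psiz}--\eqref{eq:vathetaz} identify the integrand of \eqref{2.2} at $z=r$ with the one of \eqref{F(r)}, up to the non-zero constant convention. For (c), Lemma~\ref{lemma:functions-z} yields $\vartheta_{-z}(w)=\vartheta_z(w)$, $\psi_{-z}(w)=\psi_z(w)$ and $\phi_{-z}(w)=\phi_z(w)$ pointwise in $w$, so $F(-z)=F(z)$. For (d), the prefactor $z^2/b^2$ in \eqref{eq:vathetaz} gives $\vartheta_z(w)=O(z^2)$ uniformly on $|w|=1$. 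Using $\sz(w)=i\cz(-iw)$ one rewrites
\begin{equation*}
\phi_z(w)=-\frac{i}{w}\,p_1(z\cz(w))\,p_1(z\cz(-iw))\,\bigl[z\cz(w)\,q_1(z\cz(w))\bigr]\bigl[z\cz(-iw)\,q_1(z\cz(-iw))\bigr],
\end{equation*}
and each bracketed factor $u\,q_1(u)$ is bounded near $u=0$ (the simple pole of $q_1$ at the origin, which is present exactly when $\wt\rho_{\beta_1}\in\Zb$, is absorbed by the leading $u$), while $\psi_z$ and $p_1$ are continuous at $z=0$. The integrand is therefore $O(z^2)$ uniformly in $w$, so $F(z)z^{-2}$ is bounded near $0$.

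The principal obstacle is step (a): one must pinpoint the poles of the rank-one cotangent factor $q_1$ after the substitutions $z\mapsto z\cz(w)$ and $z\mapsto z\cz(-iw)$, and then optimize over $|w|=1$ in order to identify the excluded set $i\bigl((-\infty,-L]\cup[L,+\infty)\bigr)$ with the threshold $L$ given by \eqref{eq:L}. A minor subtlety is that the original form of $\phi_z(w)$ seems to possess additional singularities at $w=\pm 1,\pm i$ (the zeros of $\cz(w)\sz(w)$), but these are cancelled by the rewriting displayed above, which also makes the bound at $z=0$ transparent.
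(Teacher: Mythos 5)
Your overall strategy is the right one and coincides with what the paper intends (the paper omits the proof entirely, deferring to \cite[Lemma~3]{HPP15}): holomorphy away from the set where $p_1q_1$ composed with $z\cz(w)$ or $z\cz(-iw)$ becomes singular, evenness from Lemma~\ref{lemma:functions-z}, and the bound at $0$ from the explicit $z^2$ factors. Steps (b), (c) and (d) are fine; in (d) your regrouping of $\phi_z$ correctly absorbs the only possible pole of $q_1$ at the origin, and the $z^2/b^2$ prefactor of $\vartheta_z$ does give the required bound.

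There is, however, a genuine flaw in step (a) as written. You list the poles of the cotangent factor $q_1$ at $x=-ib(\wt\rho_{\beta_1}+k)$ for \emph{all} $k\in\Zb$ and then assert that the resulting distances $b\,|\wt\rho_{\beta_1}+k|/|\cz(w)|$ are all $\geq L=b\,\wt\rho_{\beta_1}$. This is false whenever $\wt\rho_{\beta_1}>1$, which happens for most of the spaces under consideration ($\Sp(p,2)$, $\SO^*(10)$, $E_{6(-14)}$, and $\SU(p,2)$ with $p>3$): for $-2\wt\rho_{\beta_1}<k<0$ one has $|\wt\rho_{\beta_1}+k|<\wt\rho_{\beta_1}$, and as $w$ ranges over the circle these poles of $z\mapsto q_1(z\cz(w))$ sweep through points of $i\R$ inside the disk of radius $L$, so your argument alone would not give holomorphy of the integral on all of $\C\setminus i\bigl((-\infty,-L]\cup[L,+\infty)\bigr)$. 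The repair is standard and is exactly the content of \eqref{eq:Sj}--\eqref{eq:Splus} (i.e.\ \cite[(38)]{HPP15}): the poles of $q_1$ with $-2\wt\rho_{\beta_1}<k<0$ are cancelled by the zeros of the polynomial $p_1$ coming from the factors $\prod_{k=0}^{2\wt\rho_\beta-2}[\lambda_\beta-(\wt\rho_\beta-1)+k]$ in \eqref{eq:P}, so the singular set of the \emph{product} $p_1q_1$ is $S=\pm ib(\wt\rho_{\beta_1}+\Zb_{\geq 0})$, whose distance to the origin is exactly $L$. You invoke this cancellation only for the pole at the origin; it must be invoked for the whole range $-2\wt\rho_{\beta_1}<k<0$. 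With that substitution, step (a) goes through and the proof is complete.
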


The following proposition, giving an initial holomorphic extension of the resolvent across the spectrum of the Laplacian, has been independently proven by Mazzeo and Vasy \cite[Theorem 1.3]{MV05} and by Strohmaier \cite[Proposition 4.3]{Str05} for general Riemannian symmetric spaces of the noncompact type and even rank. It shows that all possible resonances of the resolvent are located along the half-line $i(-\infty, -L]$. According to our conventions, we will omit $f$ and $y$ from the notation and write $R(z)$ instead of $[R(z)f](y)$. 

\begin{pro}
The resolvent $R(z)=[R(z)f](y)$ extends holomorphically from $\C \setminus \big((-\infty,0] \cup i(-\infty, -L]\big)$ to a logarithmic Riemann
surface branched along $(-\infty,0]$, with the preimages of  $i\big((-\infty, -L]\cup [L, +\infty)\big)$ removed and, in terms of monodromy, it satisfies the following equation
\begin{equation*}
R(z e^{2i\pi})=R(z)+2 i\pi\, F(z)\\ \qquad (z \in \C \setminus  \big((-\infty,0]\cup i(-\infty, -L]\cup i[L,+\infty) \big)).
\end{equation*}
\end{pro}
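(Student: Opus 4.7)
The plan is to adapt the argument of \cite[Proposition 3]{HPP15} for the product of two rank-one symmetric spaces. By Corollary \ref{Cor:Plancherel density} the integrand in \eqref{F(r)} differs from the rank-one case only by the extra polynomial prefactor $\vartheta_z(w)$, which is entire in $z$. Consequently Lemma \ref{lemma:holoextF} already contains all the analytic properties of $F$ that enter the argument: holomorphy on $\C\setminus i((-\infty,-L]\cup[L,+\infty))$, evenness, and the bound $F(z)=O(z^2)$ near $0$.

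The first step is to extend $R$ holomorphically from $\C^+$ to $U:=\C\setminus((-\infty,0]\cup i(-\infty,-L])$ by deforming the integration contour in
$$R(z)=\int_0^{+\infty}\frac{F(r)\,r}{r^2-z^2}\,dr.$$
Writing $\frac{r}{r^2-z^2}=\frac{1}{2}\bigl(\frac{1}{r-z}+\frac{1}{r+z}\bigr)$, the pole $r=z$ meets the contour exactly for $z\in[0,+\infty)$ and the pole $r=-z$ exactly for $z\in(-\infty,0]$; only the former can occur for $z\in U$, and a local indentation of the contour around $r=z$ is possible because $F$ is holomorphic on a neighborhood of the indented path. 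This reproduces the initial extension of \cite{MV05} and \cite{Str05} in our setting.

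To continue $R$ to the logarithmic Riemann surface I follow $z$ along a path winding around $0$. The contour $\Gamma(z)$ is deformed continuously in $z$ so that the poles $r=\pm z$ never cross it. The obstruction is that the contour would have to cross the branch cuts of $F$ along $i((-\infty,-L]\cup[L,+\infty))$, which is exactly why the corresponding preimages on the Riemann surface must be removed. For the monodromy, during one full revolution $z\mapsto ze^{2i\pi}$ each of the poles $r=\pm z$ crosses $[0,+\infty)$ exactly once; the residue of $\frac{F(r)\,r}{r^2-z^2}$ at each of these poles equals $F(z)/2$ by evenness of $F$, and summing both contributions with the orientations dictated by the direction of crossing yields $R(ze^{2i\pi})=R(z)+2i\pi F(z)$.

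The main technical point is the bookkeeping of the contour deformation and the orientations of the two residue loops around $r=\pm z$; this is carried out exactly as in \cite[Proposition 3]{HPP15}, and the new polynomial factor $\vartheta_z(w)$ plays no active role since it is already absorbed into the holomorphy statement of Lemma \ref{lemma:holoextF}. The bound $F(z)=O(z^2)$ near $0$ ensures that the continuation of $R$ is holomorphic (not merely meromorphic) at the branch point on the principal sheet.
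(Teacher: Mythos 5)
Your argument is correct and is essentially the intended one: the paper gives no proof of this proposition, deferring to Mazzeo--Vasy \cite{MV05}, Strohmaier \cite{Str05} and the analogue in \cite{HPP15}, and the underlying argument is exactly your contour deformation of $\int_0^\infty F(r)\,r\,(r^2-z^2)^{-1}\,dr$ via the partial-fraction split, using the evenness of $F$ and its holomorphy off $i\big((-\infty,-L]\cup[L,+\infty)\big)$ supplied by Lemma~\ref{lemma:holoextF}; the residue bookkeeping (each pole crossing $[0,+\infty)$ once per revolution, each contributing $\pi i F(z)$ with the same sign) correctly yields the monodromy relation and reproduces the decomposition $R=H+\pi i F$ of Proposition~\ref{pro:holoextRF}. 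Two harmless imprecisions: the removal of the preimages of $i\big((-\infty,-L]\cup[L,+\infty)\big)$ is most transparently forced by the singularities of the accumulated residue terms $\pi i F(z)$ in the variable $z$ (equivalently, by the deformed contour being pinched against the branch cuts of $F$ in the $r$-plane), and the $O(z^2)$ bound near $0$ is not actually needed for this statement since $0$ lies on the branch locus $(-\infty,0]$.
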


The starting point for studying the meromorphic extension of $R$ across $i(-\infty, -L]$ is the Proposition \ref{pro:holoextRF} below. It says that this meromorphic extension is equivalent to that of function $F$. This proposition is analogous to \cite[Proposition 4]{HPP15} and its proof is omitted.

\begin{pro}
\label{pro:holoextRF}
Fix $x_0>0$ and $y_0>0$. Let
\begin{eqnarray*}
Q&=&\{z\in \C; \Re z>x_0,\ y_0> \Im z \geq 0\}\\
U&=&Q\cup  \{z\in \C; \Im z <0\}
\end{eqnarray*}
Then there is a holomorphic function $H:U\to \C$ (depending on $f\in C^\infty_c(\X)$ and $y\in \X$, which are omitted from the notation) such that
\begin{equation}
\label{eq:holoextRF}
R(z)=H(z)+\pi i\, F(z) \qquad (z\in Q).
\end{equation}
As a consequence, the resolvent $R(z)=[R(z)f](y)$ extends holomorphically from $\C^+$ to
$\C \setminus \big((-\infty,0] \cup i(-\infty, -L]\big)$.
\end{pro}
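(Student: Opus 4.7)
The plan is to mimic the proof of \cite[Proposition 4]{HPP15} via contour deformation. Start from the partial fraction identity $\frac{r}{r^2-z^2}=\tfrac{1}{2}\big(\tfrac{1}{r-z}+\tfrac{1}{r+z}\big)$ to split $R(z)=R_1(z)+R_2(z)$ with $R_j(z)=\tfrac12\int_0^\infty\frac{F(r)}{r+(-1)^j z}\,dr$. The integrand of $R_2$ has its pole at $r=-z$, which lies off $(0,\infty)$ whenever $z\notin(-\infty,0]$; hence $R_2$ is holomorphic on $\C\setminus(-\infty,0]\supset U$.

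For $R_1$, I would deform the contour of integration. Pick any $y_0'>y_0$ and let $\gamma_1$ be a piecewise-smooth path from $0$ to $+\infty$ agreeing with $[0,x_0]$ on the real axis, rising vertically at $x_0$, and then continuing to $+\infty$ along the horizontal line $\{\Im r=y_0'\}$. Define $H_1(z)=\tfrac12\int_{\gamma_1}\frac{F(r)}{r-z}\,dr$. Since $\gamma_1\subset\{\Re r\geq 0,\,0\leq\Im r\leq y_0'\}$ avoids the imaginary axis (apart from $r=0$, where $F$ is holomorphic), Lemma \ref{lemma:holoextF} ensures that $F$ is holomorphic on $\gamma_1$ and in the open strip it bounds together with $(0,\infty)$; convergence of the integral defining $H_1$ follows from Paley--Wiener decay of the spherical function factor in $F$. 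A quick check shows $\gamma_1\cap U=\emptyset$ (for $z\in Q$, $\Im z<y_0<y_0'$ and $\Re z>x_0$, so $z$ lies strictly below the horizontal part of $\gamma_1$; for $z\in\C^-$, $\Im z<0\leq\Im r$ on $\gamma_1$), so $H_1$ is holomorphic on $U$.

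For $z\in Q\cap\C^+$, close the curve by adjoining a vertical segment at $\Re r=R$ from height $0$ to $y_0'$ and letting $R\to+\infty$; this yields a counterclockwise loop enclosing the simple pole $r=z$ inside the strip $\{x_0<\Re r,\ 0<\Im r<y_0'\}$. The integral over the auxiliary vertical segment vanishes in the limit by Paley--Wiener decay of $F$ in the real direction, so Cauchy's residue theorem gives $R_1(z)-H_1(z)=\pi i\,F(z)$. Setting $H:=H_1+R_2$ produces a function holomorphic on $U$ satisfying $R(z)=H(z)+\pi i\,F(z)$ on $Q\cap\C^+$; continuity of both sides up to the boundary extends the identity to $Q\cap\R$.

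For the consequence, the identity $R=H+\pi i\,F$ furnishes a holomorphic extension of $R$ from $\C^+$ to $U\setminus i(-\infty,-L]$. Letting $x_0\to 0^+$ covers all of $(0,\infty)$, so combining with the original $R$ on $\C^+$ yields the stated extension to $\C\setminus((-\infty,0]\cup i(-\infty,-L])$. The main technical obstacle is justifying the vanishing of the auxiliary vertical segment and the absolute convergence of the integral defining $H_1$; both rely on Paley--Wiener-type growth estimates on $F$ available from $f\in C^\infty_c(\X)$.
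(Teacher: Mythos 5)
Your argument is correct and is essentially the intended one: the paper omits the proof, deferring to the analogous \cite[Proposition 4]{HPP15}, which is exactly this Cauchy-integral/contour-deformation argument (split off the regular term with pole at $r=-z$, deform the remaining half-line integral to a path avoiding $U$, and pick up $\tfrac12\cdot 2\pi i\,F(z)=\pi i\,F(z)$ from the residue at $r=z$), with the required convergence and the vanishing of the auxiliary segment supplied by Paley--Wiener estimates and Lemma \ref{lemma:holoextF}. The only point to phrase carefully is that on $Q\cap\R$ the identity \eqref{eq:holoextRF} is the definition of the boundary value of $R$ rather than an equality of two independently defined quantities, but this does not affect the proof.
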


\subsection{Meromorphic extension and residue computations}

This section is devoted to the meromorphic extension of the function $F$ (and hence of the resolvent)  across the half-line $i(-\infty, -L]$. We set 
\begin{equation}
\label{eq:psi-theta}
\psi^\vartheta_z(w)=\vartheta_z(w)\psi_z(w)
\end{equation}
 and follow the stepwise extension procedure for $F$ from \cite[\S2 and \S 3]{HPP15} with $\psi_z(w)$ replaced by $\psi^\vartheta_z(w)$. Some formulas are simplified by the fact that we are only dealing with the special case of $\X_1=\X_2$ with $\beta_1$ and $\beta_2$ of equal norms $b_1=b_2=b$ and equal odd multiplicities $m_{\beta_1}=m_{\beta_2}$. Notice also that in this paper, studying the singularities of the Plancherel density, we are replacing the elements $\rho_{\beta_1}$ and $\rho_{\beta_2}$ used in \cite{HPP15} with $\wt \rho_{\beta_1}$ and $\wt \rho_{\beta_2}$, which are equal and have value $\frac{1}{2}\big( m_\rl+\frac{m_\rs}{2}\big)$. Indeed, in the case of direct product of rank one symmetric spaces treated in \cite{HPP15},  there was no need of introducing multiple notation by distinguishing between $\rho_\beta$ and $\wt \rho_\beta$ for $\beta \in \Sigma_*$. The distinction is now necessary since $\rho_{\beta_1}=\wt \rho_{\beta_1}=\wt \rho_{\beta_2}\neq \rho_{\beta_2}$.
Furthermore, we omit the index $j$ from the notation used in \cite{HPP15} when it only refers to which of the two factors one considers. So, for instance \cite[(38)]{HPP15} yields, for the set of singularities of the product $p_1 q_1$ from \eqref{eq:pq}, the set
\begin{equation}
\label{eq:Sj}
S=S_+ \cup (-S_+)\,,
\end{equation}
where
\begin{equation}
\label{eq:Splus}
S_+=ib(\wt\rho_{\beta_1}+\Bbb Z_{\geq 0})=ib\Big(\frac{1}{2}\big(m_\rl+\frac{m_\rs}{2}\big)+\Bbb Z_{\geq 0}\Big).
\end{equation}
For $r>0$ and $c,d \in \R\setminus \{0\}$ recall the sets
\begin{eqnarray*}
\Dg_r&=&\{z\in \C;\ |z|<r\}\,,\\
\Eg_{c,d}&=&\big\{\xi+i\eta\in \C;\ \left(\tfrac{\xi}{c}\right)^2+ \left(\tfrac{\eta}{d}\right)^2<1\big\}\,.
\end{eqnarray*}
and the role they play in \cite[\S 1.4]{HPP15} for the functions $\sz$ and $\cz$ introduced in \eqref{eq:c-s}. Then \cite[Prop. 6]{HPP15} translates in the following proposition.

\begin{pro}\label{2.5}
Suppose $z\in \C\setminus i((-\infty,- L]\cup[L, \infty))$ and $r>0$ are such that
\begin{equation}\label{2.5.1}
S\cap z\partial E_{\cz(r),\sz(r)}=\emptyset.
\end{equation}
Then
\begin{equation}
\label{eq:F-contour}
F(z)=F_r(z)+2\pi i \, F_{r,{\rm res}}(z),
\end{equation}
where
\begin{eqnarray*}
F_r(z)&=&\int_{\partial D_r}\psi^\vartheta_z(w)\phi_z(w)\,dw,\\
 F_{r,{\rm res}}(z)&=&{\sum}_{w_0}'\psi^\vartheta_z(w_0)\Res_{w=w_0}\phi_z(w),
\end{eqnarray*}
and $\sum_{w_0}'$ denotes the sum over all the $w_0$ such that
\begin{equation}\label{2.5.2}
z\cz(w_0)\in S\cap z(\Eg_{\cz(r),\sz(r)}\setminus [-1,1])
\end{equation}
or
\begin{equation}\label{2.5.3}
z\cz(-iw_0)\in S\cap z(\Eg_{\cz(r),\sz(r)}\setminus [-1,1]).
\end{equation}
Both $F_r$ and $ F_{r,{\rm res}}$ are holomorphic functions on the open subset of $\C\setminus i((-\infty,- L]\cup[L, \infty))$ where the condition \eqref{2.5.1} holds. Furthermore, $F_r$ extends to a holomorphic function on the open subset of $\C$ where the condition \eqref{2.5.1} holds.
\end{pro}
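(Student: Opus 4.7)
The proposition is the analogue in this setting of \cite[Prop.~6]{HPP15}, after replacing $\psi_z$ by $\psi_z^\vartheta = \vartheta_z \psi_z$, and I would follow that proof closely. The plan is a contour deformation: starting from $F(z) = \int_{|w|=1}\psi_z^\vartheta(w)\phi_z(w)\,dw$ given by Lemma~\ref{lemma:holoextF}, I deform the contour to $\partial D_r$ and pick up the residues in the enclosed region.

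First I would identify the pole structure of the integrand on $\C^\times$ for fixed $z$. The polynomial $\vartheta_z(w)$ is entire, and $\psi_z(w) = (f \times \varphi_{\mu(z,w)})(y)$ is entire in $w$ because $\mu(z,w) = \bi\tfrac{z}{b}\cz(w)\beta_1 + \bi\tfrac{z}{b}\cz(-iw)\beta_2$ is a Laurent polynomial in $w$ and spherical functions are entire in the spectral parameter. All singularities of the integrand therefore come from $\phi_z(w)$: the factors $p_1(z\cz(w))q_1(z\cz(w))$ produce simple poles at $w_0$ with $z\cz(w_0) \in S$, and $p_1(z\cz(-iw))q_1(z\cz(-iw))$ contributes simple poles at $w_0$ with $z\cz(-iw_0) \in S$. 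Since $\cz$ maps the open annulus between $|w|=1$ and $|w|=r$ bijectively onto $\Eg_{\cz(r),\sz(r)} \setminus [-1,1]$, these are precisely the poles enclosed between $\{|w|=1\}$ and $\partial D_r$, namely the $w_0$ described by \eqref{2.5.2} and \eqref{2.5.3}.

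Second I would invoke Cauchy's residue theorem. Condition \eqref{2.5.1} ensures that no pole of the first type lies on $\partial D_r$, and since the loop $\partial D_r$ is stable under $w \mapsto -iw$, the same condition rules out poles of the second type. The residue theorem then yields \eqref{eq:F-contour} with the sum over the enclosed (simple) poles; the factorisation $\Res_{w_0}(\psi_z^\vartheta \phi_z) = \psi_z^\vartheta(w_0) \Res_{w_0}\phi_z$ is automatic since $\psi_z^\vartheta$ is holomorphic at each such $w_0$.

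Finally, the holomorphy assertions follow from standard arguments. $F_r(z)$ is a contour integral over the compact loop $\partial D_r$ of an integrand holomorphic in $z$ whenever no pole of $w \mapsto \phi_z(w)$ lies on $\partial D_r$: by compactness this is exactly condition \eqref{2.5.1}, and since $p_1$, $q_1$ and the spherical function all extend meromorphically to the full $z$-plane, $F_r$ extends holomorphically to the entire open set defined by \eqref{2.5.1}. The holomorphy of $F_{r,\mathrm{res}}$ on the intersection with $\C \setminus i\big((-\infty,-L] \cup [L,+\infty)\big)$ then follows from $F_{r,\mathrm{res}} = (F - F_r)/(2\pi i)$ and Lemma~\ref{lemma:holoextF}. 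The only point requiring genuine care in adapting the proof of \cite[Prop.~6]{HPP15} is to verify that the new polynomial factor $\vartheta_z$ does not destroy the symmetry structure used there; this is guaranteed by the $\vartheta$-part of Lemma~\ref{lemma:functions-z}, with the identity $\vartheta_z(iw) = \vartheta_z(w)$ (crucially using that $m_\mrm$ is even) being the key input. I expect this compatibility check to be the only genuine obstacle.
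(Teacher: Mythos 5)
Your proposal is correct and follows essentially the same route as the paper, which simply transplants the contour-deformation argument of \cite[Prop.~6]{HPP15} with $\psi_z$ replaced by $\psi_z^\vartheta$; your identification of the pole set, the use of the Joukowski-type mapping properties of $\cz$, and the symmetry check via Lemma~\ref{lemma:functions-z} all match. The only slip is calling $\vartheta_z$ and $\psi_z$ ``entire'' in $w$ --- they are Laurent polynomials in $w$, holomorphic only on $\C^\times$ --- but this is harmless since $w=0$ lies outside the region swept by the deformation.
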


To make the function $ F_{r,{\rm res}}(z)$ explicit, we proceed as in \cite[\S 3.1]{HPP15}. The present situation is in fact simpler, because only the case
$L_{1,\ell}=L_{2,\ell}$ occurs. We denote this common value by $L_\ell$, i.e. 
we define for $\ell\in\Zb_{\geq 0}$
\begin{equation}
\label{eq:Lell}
L_\ell=b(\wt \rho_{\beta_1}+\ell)=b\big(\tfrac{m_\rl}{2}+\tfrac{m_\rs}{4}+\ell\big)\,.
\end{equation}
So $S_+=\{iL_\ell; \ell\in \Zb_{\geq 0}\}$.

If $0\neq z\in \C\setminus i \big((-\infty,-L_{\ell}]\cup [L_{\ell},+\infty)\big)$, then $
\frac{i L_\ell}{z} \in \C\setminus [-1,1]$ and we can 
uniquely define $w_1^\pm \in \Dg_1\setminus \{0\}$ satisfying
\begin{eqnarray}
\label{eq:cw1pm-eps-k1}
z\cz(w_1^\pm)&=&\pm i L_{\ell}\,.
\end{eqnarray}
Since $c(-w)=-c(w)$, we obtain that $w_1^-=-w_1^+$. Moreover, 
$w_1$ satisfies \eqref{2.5.2} if and only if $w_2=iw_1$ satisfies \eqref{2.5.3}
because $z(\Eg_{\cz(r),\sz(r)}\setminus [-1,1])$ is symmetric with respect to the origin $0\in\C$. 
Hence
\begin{multline}
\label{eq:Gr-1}
 F_{r,{\rm res}}(z)={\sum}_{w_1^+}' 
\Big[\psi^\vartheta_z(w_1^+)\Res_{w=w_1^+}\phi_z(w)+ 
\psi^\vartheta_z(w_1^-)\Res_{w=w_1^-}\phi_z(w)+\\
\psi^\vartheta_z(iw_1^+)\Res_{w=iw_1^+}\phi_z(w)+ 
\psi^\vartheta_z(iw_1^-)\Res_{w=iw_1^-}\phi_z(w)\Big]\,,  
\end{multline}
where $\sum_{w_1^+}'$ denotes the sum over all the $w_1^+$ such that
$
z\cz(w_1^+)\in S_+\cap z(\Eg_{\cz(r),\sz(r)}\setminus [-1,1])
$
and $w_1^-=-w_1^+$.

Then, using Lemma \ref{lemma:functions-z}, we obtain the following analogue of \cite[Lemma 9]{HPP15}.

\begin{lem}
\label{lemma:Gjkj}
For $\ell\in \Zb_{\geq 0}$ and
 $0\neq z\in \C\setminus i \big((-\infty,-L_{\ell}]\cup [L_{\ell},+\infty)\big)$,
let $w_1^\pm$ be defined by \eqref{eq:cw1pm-eps-k1}. Then 
\begin{eqnarray}
\label{eq:psithetazw1}
&&\psi^\vartheta_z(w_1^+)=\psi^\vartheta_z(w_1^-)=\psi^\vartheta_z(iw_1^+)=
\psi^\vartheta_z(iw_1^-)\,,\\
\label{eq:Res-phiw1}
&&
\Res_{w=w_1^+} \phi_z(w)=\Res_{w=w_1^-} \phi_z(w)=
\Res_{w=iw_1^+} \phi_z(w)=\Res_{w=-iw_1^-} \phi_z(w)\,.\\
\end{eqnarray}
Explicitly, 
\begin{eqnarray*}
\label{eq:psithetazw1-1}
\psi^\vartheta_z(w_1^+)&=&\psi^\vartheta_z\Big(\cz^{-1}\big(\tfrac{iL_{\ell}}{z}\big)\Big)\,,\\
\label{eq:Res-phiw1}
\Res_{w=w_1^+} \phi_z(w)&=&-C_{\ell}\,
p_1\Big(i z (\sz\circ \cz^{-1})\big(\tfrac{iL_{\ell}}{z}\big)\Big) \,
q_1\Big(i z (\sz\circ \cz^{-1})\big(\tfrac{iL_{\ell}}{z}\big)\Big)\,,
\end{eqnarray*}
where
\begin{equation}\label{constants}
C_{\ell}= \frac{b}{\pi}L_{\ell}\, p_1(iL_{\ell}) \neq 0\,.
\end{equation}
\end{lem}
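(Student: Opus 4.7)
The plan is to follow the template of \cite[Lemma 9]{HPP15}, the only genuine novelty being the extra polynomial factor $\vartheta_z$ in $\psi^\vartheta_z = \vartheta_z\psi_z$.

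For the first string of equalities \eqref{eq:psithetazw1} I would simply invoke Lemma \ref{lemma:functions-z}: both $\psi_z$ and $\vartheta_z$ are invariant under $w\mapsto -w$ and $w\mapsto iw$, hence so is their product. Since $w_1^- = -w_1^+$ and $iw_1^- = -iw_1^+$, the four points $w_1^\pm, iw_1^\pm$ form a single orbit of the cyclic group $\{\pm 1,\pm i\}$ acting by multiplication, so $\psi^\vartheta_z$ takes the same value at all four. The explicit formula is nothing but the defining relation \eqref{eq:cw1pm-eps-k1} for $w_1^+$. For the four residue identities \eqref{eq:Res-phiw1} I would use the quasi-invariance $\phi_z(-w) = -\phi_z(w)$ and $\phi_z(iw) = -i\phi_z(w)$ from Lemma \ref{lemma:functions-z}, together with the standard change-of-variable formula for residues: the substitution $u=-w$ (resp. $u=iw$) identifies $\Res_{w=-w_0}\phi_z(w)$ (resp. $\Res_{w=iw_0}\phi_z(w)$) with $\Res_{u=w_0}\phi_z(u)$, because the factor $-1$ (resp. $-i$) coming from the functional equation exactly cancels the Jacobian of the substitution.

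For the explicit form of $\Res_{w=w_1^+}\phi_z(w)$ I would inspect the defining product \eqref{eq:phiz}. At $w=w_1^+$ the only singular factor is $q_1(z\cz(w))$, since $z\cz(w_1^+) = iL_\ell \in S_+$ is a simple pole of $q_1$ coming from the cotangent factor in $Q_1$; denote its residue by $R_\ell$. From the identity $\cz'(w) = \sz(w)/w$, which follows at once from \eqref{eq:c-s}, the chain rule gives
\[
\Res_{w=w_1^+} q_1\bigl(z\cz(w)\bigr) = \frac{R_\ell\, w_1^+}{z\,\sz(w_1^+)}.
\]
The factor $\cz(w)\sz(w)/w$ in front of $\phi_z$ combines cleanly with this denominator and with $\cz(w_1^+)=iL_\ell/z$, producing the compact intermediate expression
\[
\Res_{w=w_1^+}\phi_z(w) = -iL_\ell\, R_\ell\, p_1(iL_\ell)\,(p_1q_1)\bigl(z\cz(-iw_1^+)\bigr).
\]

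The final step is to simplify $z\cz(-iw_1^+)$ and fold in $R_\ell$. The identity $\cz(-iw)=-i\sz(w)$ turns the argument into $-iz\sz(w_1^+)$, and the oddness of $p_1q_1$ (already used in the proof of Lemma \ref{lemma:functions-z}) converts the factor into $-(p_1q_1)\bigl(iz\sz(w_1^+)\bigr)$. It remains to insert the explicit value of $R_\ell$: writing $q_1(x)=Q_1(ix/b)$ and using the residue $1/\pi$ of $\cot(\pi(\,\cdot\,))$ at its integer poles, together with the Jacobian $d\lambda/dx = i/b$, one evaluates $R_\ell$ explicitly and arrives at the constant $C_\ell = (b/\pi)L_\ell\, p_1(iL_\ell) \neq 0$ with the stated minus sign. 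I expect the main obstacle to be precisely this sign-bookkeeping in the last step: several sign flips must conspire correctly (the oddness of $p_1q_1$, the rewriting $\cz(-iw)=-i\sz(w)$, the factor $i$ in $\lambda=ix/b$, and the explicit form of $Q_1$ from \eqref{eq:Q}). The rest of the argument is a routine adaptation of \cite[Lemma 9]{HPP15}.
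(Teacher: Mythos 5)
Your argument is correct and follows exactly the route the paper intends: the paper gives no separate proof of this lemma, deriving it from the symmetry relations of Lemma \ref{lemma:functions-z} and the residue computation of \cite[Lemma 9]{HPP15}, which is precisely what you carry out (and your sign bookkeeping for $C_\ell$ checks out, since the residue of $q_1$ at $iL_\ell$ is $ib/\pi$ and combines with the factor $iL_\ell$ and the oddness of $p_1q_1$ to give the stated $-\tfrac{b}{\pi}L_\ell\,p_1(iL_\ell)$).
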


\begin{cor}
\label{cor:Gell}
Let $\ell\in \Zb_{\geq 0}$ and $0\neq z\in \C\setminus i \big((-\infty,-L_{\ell}]\cup [L_{\ell},+\infty)\big)$. Set
\begin{eqnarray}
\label{eq:Gell}
G_{\ell}(z)&=&-C_\ell \psi^\vartheta_z\Big(\cz^{-1}\big(\tfrac{iL_{\ell}}{z}\big)\Big)
p_1\Big(i z (\sz\circ \cz^{-1})\big(\tfrac{iL_{\ell}}{z}\big)\Big) \,
q_1\Big(i z (\sz\circ \cz^{-1})\big(\tfrac{iL_{\ell}}{z}\big)\Big)\,,\\
\label{eq:Sjrz}
\Sg_{r,z,+}&=&\{\ell\in \Zb_{\geq 0}: i L_{\ell}\in z\big(\Eg_{\cz(r),\sz(r)}\setminus [-1,1]\big)\}\,.
\end{eqnarray}
Then the function $ F_{r,{\rm res}}(z)$ on the right-hand side of \eqref{eq:F-contour} is given by
\begin{equation}
\label{eq:Gr-Splus}
 F_{r,{\rm res}}(z)=4 \sum_{\ell \in \Sg_{r,z,+}} G_\ell(z)\,.
\end{equation}
\end{cor}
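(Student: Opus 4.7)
The plan is to assemble the corollary by feeding the explicit identities from Lemma \ref{lemma:Gjkj} into the preliminary expression \eqref{eq:Gr-1} for $F_{r,{\rm res}}(z)$, and then to re-parametrize the remaining sum over $w_1^+$ by the index $\ell\in\Zb_{\geq 0}$ labelling the points of $S_+$.

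First I would start from equation \eqref{eq:Gr-1}, which already reorganizes the primed sum in Proposition \ref{2.5} into blocks of four terms indexed by $w_1^+$ (the four associated singularities being $\pm w_1^+$ and $\pm iw_1^+$, produced by the $\langle w\mapsto -w,w\mapsto iw\rangle$-symmetries of $\phi_z$ together with the fact that $\psi^\vartheta_z$ is $\langle w\mapsto -w,w\mapsto iw\rangle$-invariant by Lemma \ref{lemma:functions-z}). By the equalities
\[
\psi^\vartheta_z(w_1^+)=\psi^\vartheta_z(w_1^-)=\psi^\vartheta_z(iw_1^+)=\psi^\vartheta_z(iw_1^-)
\]
and
\[
\Res_{w=w_1^+}\phi_z(w)=\Res_{w=w_1^-}\phi_z(w)=\Res_{w=iw_1^+}\phi_z(w)=\Res_{w=-iw_1^-}\phi_z(w)
\]
from Lemma \ref{lemma:Gjkj}, all four summands inside the $\sum_{w_1^+}'$ collapse to a single common value, producing an overall factor of $4$.

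Second, I would insert the explicit formulas from the same lemma to identify this common value. Namely,
\[
\psi^\vartheta_z(w_1^+)\,\Res_{w=w_1^+}\phi_z(w)=-C_\ell\,\psi^\vartheta_z\!\Big(\cz^{-1}\!\big(\tfrac{iL_\ell}{z}\big)\Big)\,p_1\!\Big(iz\,(\sz\circ\cz^{-1})\!\big(\tfrac{iL_\ell}{z}\big)\Big)\,q_1\!\Big(iz\,(\sz\circ\cz^{-1})\!\big(\tfrac{iL_\ell}{z}\big)\Big),
\]
which is exactly $G_\ell(z)$ as defined by \eqref{eq:Gell}, whenever $w_1^+$ corresponds to $z\cz(w_1^+)=iL_\ell\in S_+$.

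Third, I would verify that the primed sum over $w_1^+$ is nothing but the sum over $\ell\in\Sg_{r,z,+}$. By the definition of $w_1^+$ in \eqref{eq:cw1pm-eps-k1}, the map $w_1^+\mapsto\ell$, where $z\cz(w_1^+)=iL_\ell$, is a bijection between the $w_1^+\in\Dg_1\setminus\{0\}$ satisfying $z\cz(w_1^+)\in S_+\cap z(\Eg_{\cz(r),\sz(r)}\setminus[-1,1])$ and the set $\Sg_{r,z,+}$ of $\ell\in\Zb_{\geq 0}$ with $iL_\ell\in z(\Eg_{\cz(r),\sz(r)}\setminus[-1,1])$; uniqueness comes from the fact that $\cz$ restricts to a biholomorphism of $\Dg_1\setminus\{0\}$ onto $\C\setminus[-1,1]$. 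Combining the three steps yields
\[
F_{r,{\rm res}}(z)=4\sum_{\ell\in\Sg_{r,z,+}}G_\ell(z).
\]

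There is essentially no obstacle here: the corollary is a bookkeeping result packaging Lemma \ref{lemma:Gjkj} into a form suitable for the forthcoming residue analysis. The only point requiring a small check is the bijection between $w_1^+$ and $\ell$, i.e.\ that every singularity in $S_+$ inside the ellipse contributes exactly one $w_1^+\in\Dg_1\setminus\{0\}$, which is immediate from the injectivity of $\cz$ on $\Dg_1\setminus\{0\}$.
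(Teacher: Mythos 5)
Your proof is correct and follows exactly the route the paper intends: the corollary is stated without proof as an immediate consequence of \eqref{eq:Gr-1} together with the four-fold equalities and explicit formulas of Lemma \ref{lemma:Gjkj}, plus the bijection between the admissible $w_1^+$ and the indices $\ell\in\Sg_{r,z,+}$ coming from the injectivity of $\cz$ on $\Dg_1\setminus\{0\}$. Nothing is missing.
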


The following proposition is analogous to \cite[Proposition 10]{HPP15}.

\begin{pro}
\label{prop:SjrzW}
For $0<r<1$ and $0\neq z\in \C\setminus i((-\infty,- L]\cup[L, +\infty))$,
let $\Sg_{r,z,+}$ be as in \eqref{eq:Sjrz}. Moreover,
let $W \subseteq \C$ be a connected open set such that
\begin{equation}
\label{eq:intersection-W}
S \cap W \partial \Eg_{\cz(r),\sz(r)}=\emptyset
\end{equation}
and set 
\begin{equation}
\label{eq:SjrzW}
S_{r,W,+}=\{\ell\in \Zb_{\geq 0}: i L_{\ell}\in W\Eg_{\cz(r),\sz(r)}\}
\qquad (z \in W\setminus i\R)\,.
\end{equation}
Then $S_{r,z,+}=S_{r,W,+}$. In particular,  $S_{r,z,+}$ does not depend on 
$z \in W\setminus i\R$.
\end{pro}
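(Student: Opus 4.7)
The plan is to fix $\ell\in\Zb_{\geq 0}$ and argue, by a connectedness argument on $W$, that membership $\ell\in\Sg_{r,z,+}$ is independent of the choice of $z\in W\setminus i\R$ and has the same truth value as $\ell\in S_{r,W,+}$. First I would simplify the defining condition for $\Sg_{r,z,+}$. Since $iL_\ell$ is purely imaginary with $L_\ell>0$ while, for $z\in W\setminus i\R$, the real segment $z[-1,1]$ lies off the imaginary axis, $iL_\ell\notin z[-1,1]$. Hence
$$
\Sg_{r,z,+}=\{\ell\in\Zb_{\geq 0}:\ iL_\ell\in z\Eg_{\cz(r),\sz(r)}\}\qquad (z\in W\setminus i\R).
$$

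Next, for each fixed $\ell$, I would introduce the subsets
$$
U_\ell=\{z\in W:\ iL_\ell\in z\Eg_{\cz(r),\sz(r)}\},\qquad V_\ell=\{z\in W:\ iL_\ell\notin z\overline{\Eg_{\cz(r),\sz(r)}}\}
$$
of $W$. Both are open: on $W\setminus\{0\}$ they are the preimages, under the continuous map $z\mapsto iL_\ell/z$, of the open ellipse $\Eg_{\cz(r),\sz(r)}$ and of the exterior of its closure, respectively; and if $0\in W$ then $0\in V_\ell$ with an evident neighborhood, since $0\cdot\overline{\Eg_{\cz(r),\sz(r)}}=\{0\}\not\ni iL_\ell$. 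By construction $U_\ell\cap V_\ell=\emptyset$. The hypothesis $S\cap W\partial\Eg_{\cz(r),\sz(r)}=\emptyset$, together with $iL_\ell\in S_+\subseteq S$, rules out the intermediate case $iL_\ell\in z\partial\Eg_{\cz(r),\sz(r)}$ for every $z\in W$, so $W=U_\ell\sqcup V_\ell$. Connectedness of $W$ then forces $U_\ell=W$ or $U_\ell=\emptyset$.

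Finally I would combine the two alternatives. If $U_\ell=W$, then for each $z\in W\setminus i\R$ one has $\ell\in\Sg_{r,z,+}$, and $iL_\ell\in z\Eg_{\cz(r),\sz(r)}\subseteq W\Eg_{\cz(r),\sz(r)}$ gives $\ell\in S_{r,W,+}$. If instead $U_\ell=\emptyset$, no $z\in W$ satisfies $iL_\ell\in z\Eg_{\cz(r),\sz(r)}$, so $\ell\notin\Sg_{r,z,+}$ for every $z\in W\setminus i\R$ and also $\ell\notin S_{r,W,+}$. In both cases $\Sg_{r,z,+}$ and $S_{r,W,+}$ contain $\ell$ under exactly the same condition, which yields the equality $\Sg_{r,z,+}=S_{r,W,+}$ and the independence on $z$. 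The only mild obstacle is the bookkeeping around $z=0$, which (as noted above) is handled trivially by $0\overline{\Eg_{\cz(r),\sz(r)}}=\{0\}$; the rest of the argument is a clean dichotomy powered by the boundary-avoidance hypothesis and connectedness of $W$.
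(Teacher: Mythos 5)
Your proof is correct, and it is essentially the argument intended by the paper (which omits the proof, referring to \cite[Proposition 10]{HPP15}): the reduction from $\Eg_{\cz(r),\sz(r)}\setminus[-1,1]$ to $\Eg_{\cz(r),\sz(r)}$ for $z\notin i\R$, followed by the open partition $W=U_\ell\sqcup V_\ell$ whose completeness is exactly the boundary-avoidance hypothesis \eqref{eq:intersection-W}, and connectedness of $W$. The handling of $z=0$ via boundedness of $\overline{\Eg_{\cz(r),\sz(r)}}$ is the right way to close the only loose end.
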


Proceeding now as in \cite[Corollaries 11, 13 and Lemma 12]{HPP15}, we obtain the following 
result for points on $i\R$.

\begin{cor}
\label{cor:SjrvW}
For every $iv \in i\R$ and for every $r$ with $0<r<1$ and $vc(r)\notin iS$
there is a connected open neighborhood $W_v$ of $iv$ in $\C$ satisfying the following conditions.
\begin{enumerate}
\item
$S \cap W_v \partial \Eg_{\cz(r),\sz(r)}=\emptyset$\,,
\item
$S_{r,W_v,+}=\{\ell\in \Zb_{\geq 0}:i L_{\ell}\in iv\Eg_{\cz(r),\sz(r)}\}=
\leftFpar 0, N_{v}\rightFpar$ for some $N_{v}\in \Zb_{\geq 0}$.
\item
For $n\in \Zb_{\geq 0}$, set 
\begin{equation}
\label{eq:Ijm}
I_n=b\wt\rho_{\beta_1}+b[n,n+1)=[L_{n},L_{n+1})\,.
\end{equation}
If $v \in I_n$ then $N_{v}=n$. Hence
\begin{equation}
\label{eq:Gr-Gell}
 F_{r,{\rm res}}(z)=4 \sum_{\ell=1}^n G_{\ell}(z) \qquad (z \in W_v\setminus i\R)\,.
\end{equation}
\end{enumerate}
\end{cor}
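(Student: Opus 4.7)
My plan follows the three-part structure of \cite[Cor.~11, Lemma~12, Cor.~13]{HPP15}, adapted to the present setting where the two rank-one factors share their norms and multiplicities, so $L_{1,\ell}=L_{2,\ell}=L_\ell$ throughout.

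For (1): I would first observe that $S\subseteq i\R$ while the rotated ellipse $iv\partial\Eg_{\cz(r),\sz(r)}$ crosses $i\R$ at exactly the two points $\pm iv\cz(r)$, so $S\cap iv\partial\Eg_{\cz(r),\sz(r)}=\emptyset$ is equivalent to $v\cz(r)\neq\pm L_\ell$ for all $\ell\geq 0$, i.e.\ to $vc(r)\notin iS$. With this established at $z=iv$, compactness of $\partial\Eg_{\cz(r),\sz(r)}$ together with closedness of $S$ make the map $z\mapsto\mathrm{dist}(S,z\partial\Eg_{\cz(r),\sz(r)})$ continuous and strictly positive at $iv$, so it remains strictly positive on a suitable connected open neighborhood $W_v$ of $iv$.

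For (2): Applying Proposition~\ref{prop:SjrzW} to $W_v$ gives that $S_{r,z,+}=S_{r,W_v,+}$ is constant as $z$ varies in $W_v\setminus i\R$. The common value is determined by the limit point $iv$: since $iv\Eg_{\cz(r),\sz(r)}\cap i\R = i(-|v|\cz(r),|v|\cz(r))$, one has
\[
S_{r,W_v,+}=\{\ell\in\Zb_{\geq 0}:L_\ell<|v|\cz(r)\}.
\]
Strict monotonicity of $\ell\mapsto L_\ell$ makes this an initial segment $\leftFpar 0,N_v\rightFpar$, with $N_v$ uniquely characterized by the sandwich $L_{N_v}<|v|\cz(r)<L_{N_v+1}$, the upper strict inequality being forced by $vc(r)\notin iS$.

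For (3): Given $v\in I_n=[L_n,L_{n+1})$, the equality $N_v=n$ reduces via part (2) to the double inequality $L_n<|v|\cz(r)<L_{n+1}$. The lower bound is automatic from $\cz(r)>1$ combined with $|v|\geq L_n$; the upper bound $|v|\cz(r)<L_{n+1}$ holds when the contour parameter $r$ lies in the connected component of $\{r\in(0,1):vc(r)\notin iS\}$ adjoining $r=1^-$ (where $\cz(r)$ is arbitrarily close to $1$). With $N_v=n$ secured, substituting into Corollary~\ref{cor:Gell} produces the stated sum formula. The main obstacle I anticipate is exactly the interplay between the universal quantifier ``for every $r$'' in the hypothesis and the required upper bound $|v|\cz(r)<L_{n+1}$: read literally, the hypothesis does not force this bound, so one must either read the corollary as applying on the connected component of admissible $r$'s adjoining $r=1^-$, or invoke the $r$-independence of $F=F_r+2\pi iF_{r,\mathrm{res}}$ in a deformation argument to transfer the stated representation across different components.
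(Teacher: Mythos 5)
Your argument is correct and is essentially the paper's route: the paper gives no independent proof here but defers to \cite{HPP15} (Corollaries 11, 13 and Lemma 12), whose three-step structure (continuity/compactness for (1), constancy of $S_{r,z,+}$ on $W_v$ via Proposition \ref{prop:SjrzW} plus the computation $iv\Eg_{\cz(r),\sz(r)}\cap i\R=i(-|v|\cz(r),|v|\cz(r))$ for (2), and the sandwich $L_n<|v|\cz(r)<L_{n+1}$ for (3)) is exactly what you reproduce. The quantifier issue you flag is genuine but harmless: the paper later uses the corollary only as an existence statement (``there exists $0<r_v<1$ and an open neighborhood $W_v$\dots''), i.e.\ with $r$ taken close enough to $1$ that $|v|\cz(r)<L_{n+1}$, which is precisely your proposed resolution; note also that the sum in \eqref{eq:Gr-Gell} should start at $\ell=0$, consistent with Corollary \ref{cor:Gell} and with \eqref{eq:Fresidues1}.
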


We recall the relevant Riemann surfaces from \cite[(76)]{HPP15}. Fix  $\ell\in\Bbb Z_{\geq 0}$. Then
\begin{equation}
\label{eq:Mjk}
\M_{\ell}=\Big\{(z,\zeta)\in\C^\times\times(\C\setminus\{i, -i\})\ :\ \zeta^2=\Big(\frac{iL_{\ell}}{z}\Big)^2-1\Big\}
\end{equation}
is a Riemann surface above $\C^\times$, with projection map $\pi_{\ell}:\M_{\ell} \ni (z,\zeta)\to z\in \C^\times$\,.
The fiber of $\pi_{\ell}$ above $z\in \C^\times$ is $\{(z,\zeta), (z,-\zeta)\}$.
In particular, the restriction of $\pi_{\ell}$ to $\M_{\ell}\setminus \{(\pm i L_{\ell},0)\}$ is a double cover
of $\C^\times \setminus \{\pm i L_{\ell}\}$.

Now \cite[Lemma 15]{HPP15} has the following analogue. 
The difference is that we have replaced $\psi_z(w)$ by $\psi^\vartheta_z(w)$.
So we have to look for possible cancellations of singularities arising from the additional polynomial factor $\vartheta_z$.

\begin{lem}\label{2.8}
In the above notation,
\begin{eqnarray}\label{eq:tGell}
\wt G_{\ell}:\M_{\ell}\ni (z,\zeta)&\to&\frac{b}{\pi} \, L_{\ell}\,  p_1(i L_{\ell})\psi^\vartheta_z\big(\frac{i L_{\ell}}{z}-\zeta\big)
 p_1(iz\zeta) q_1(iz\zeta)\in\C
\end{eqnarray}
is the meromorphic extension to $\M_{\ell}$ of a lift of $G_{\ell}$.

The function $\wt G_{\ell}$ has simple poles at all points $(z,\zeta)\in \M_{\ell}$ such that
\begin{equation}\label{2.9.1}
z=\pm i \sqrt{ L_{\ell}^2+L_{m}^2},
\end{equation}
where $m\in\Zb_{\geq 0} \setminus \bigleftFpar \ell -\big(\frac{m_\mrm}{2}-1\big), 
\ell +\big(\frac{m_\mrm}{2}-1\big)\bigrightFpar$.
\end{lem}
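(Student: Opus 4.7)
The plan is to verify the lemma in three steps: first that $\wt G_\ell$ is meromorphic on $\M_\ell$ and lifts $G_\ell$ via $\pi_\ell$; next to locate the candidate poles on $\M_\ell$; finally to determine which of these are actual simple poles and which are cancelled by zeros of $\vartheta_z$. For the lift property I start from $z \in \C^\times$ in the domain of $G_\ell$ given by Corollary~\ref{cor:Gell} and take $w_1^+ = \cz^{-1}(iL_\ell/z) \in \Dg_1\setminus\{0\}$. Then $\cz(w_1^+) = iL_\ell/z$ and $\sz(w_1^+)^2 = \cz(w_1^+)^2 - 1 = (iL_\ell/z)^2 - 1$, so setting $\zeta = \sz(w_1^+)$ places $(z,\zeta) \in \M_\ell$, with $w_1^+ = \cz(w_1^+) + \sz(w_1^+) = iL_\ell/z + \zeta$ and reciprocal $1/w_1^+ = iL_\ell/z - \zeta$. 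Because $\cz(w^{-1}) = \cz(w)$ and $\sz(w^{-1}) = -\sz(w)$, the argument $\mu(z,w)$ from \eqref{eq:psiz} changes under $w \to w^{-1}$ by the Weyl reflection $\beta_2 \to -\beta_2$, hence $\psi_z(w^{-1}) = \psi_z(w)$ by $\W$-invariance of $\varphi_\lambda$. A parallel rearrangement (using $h = m_\mrm - k$ in \eqref{eq:vathetaz}) gives $\vartheta_z(w^{-1}) = \vartheta_z(w)$. Combined with $\zeta = (\sz\circ\cz^{-1})(iL_\ell/z)$, this shows that \eqref{eq:tGell} reproduces the formula of Corollary~\ref{cor:Gell} for $G_\ell(z)$, up to the sign of the constant $-C_\ell$.

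For the candidate poles, observe that $(b/\pi)L_\ell p_1(iL_\ell)$ is a nonzero constant, $p_1$ is a polynomial, and $\psi^\vartheta_z$ is entire on $\C \times (\C\setminus\{0,\pm i\})$ because $\psi_z$ is the spherical transform of a compactly supported smooth function and $\vartheta_z$ is polynomial. Hence all poles of $\wt G_\ell$ come from $q_1(iz\zeta)$, whose poles are simple and located exactly where $iz\zeta \in S$, i.e., $z\zeta = \pm L_m$ for $m \in \Zb_{\geq 0}$, by \eqref{eq:Sj}. Combining with the defining equation $z^2\zeta^2 = -L_\ell^2 - z^2$ of $\M_\ell$ gives $z^2 = -(L_\ell^2 + L_m^2)$, the claimed locations $z = \pm i\sqrt{L_\ell^2 + L_m^2}$.

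To decide which candidates survive, I evaluate $\vartheta_z$ at such a point using $\cz(w) = iL_\ell/z$, $\sz(w) = \zeta$, and $\cz(-iw) = -i\sz(w)$. The leading factor $\cz(w)^2 - \cz(-iw)^2 = -L_\ell^2/z^2 + \zeta^2$ equals $(L_\ell^2 - L_m^2)/(L_\ell^2 + L_m^2)$, vanishing iff $m = \ell$. The $k$-th factor of the product in \eqref{eq:vathetaz} becomes $(z\zeta/b - m_\mrm/2 + k)^2 - L_\ell^2/b^2$, which factors as $(\pm m - \ell + k - m_\mrm/2)(\pm m + \ell + k + 2\wt\rho_{\beta_1} - m_\mrm/2)$ after substituting $L_\mu/b = \wt\rho_{\beta_1} + \mu$ and $z\zeta = \pm L_m$. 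Direct inspection of Table~\ref{table tilde rho} shows $2\wt\rho_{\beta_1} \geq m_\mrm/2$ in every case, so the second factor is $\geq k \geq 1$ and never zero. The first factor vanishes for some admissible $k \in \{1,\dots,m_\mrm-1\}$ iff $|\ell - m| \leq m_\mrm/2 - 1$, i.e., iff $m \in \leftFpar \ell - (m_\mrm/2 - 1), \ell + (m_\mrm/2 - 1) \rightFpar$, precisely the excluded discrete interval in the statement. The surviving poles are then simple because $q_1(iz\zeta)$ contributes a simple pole and the other factors are holomorphic and nonzero there.

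The main obstacle is the bookkeeping in the final step: matching each pair $(\ell, m)$ to the specific $k \in \{1,\dots,m_\mrm-1\}$ whose factor in \eqref{eq:vathetaz} produces the cancellation, and confirming via Table~\ref{table tilde rho} that the auxiliary factor $\pm m + \ell + k + 2\wt\rho_{\beta_1} - m_\mrm/2$ is strictly positive in every case so that no additional accidental cancellation occurs. The remaining arguments are algebraic consequences of the symmetries already established in Lemma~\ref{lemma:functions-z}.
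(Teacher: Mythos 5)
Your strategy is the same as the paper's: find the candidate poles from the singular set $S$ of $p_1q_1$, then evaluate $\vartheta_z$ at those points and determine for which $m$ it vanishes. The lift computation (via $w\mapsto w^{-1}$ and the symmetries of Lemma \ref{lemma:functions-z}) and the localization of the candidates at $z=\pm i\sqrt{L_\ell^2+L_m^2}$ are correct. The gap is in your last step. You claim that $2\wt\rho_{\beta_1}\geq m_\mrm/2$ ``in every case'', so that the factor $m+\ell+k+2\wt\rho_{\beta_1}-m_\mrm/2$ is always $\geq k\geq 1$ and never vanishes. This is false for $\G=\SO_0(p,2)$ with $p\geq 6$ even, a family that \emph{is} covered by this section (only odd $p$ is excluded): there $2\wt\rho_{\beta_1}=1$ while $m_\mrm/2=(p-2)/2\geq 2$. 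For instance, for $p=6$, $\ell=m=0$, $k=1$ this factor equals $0$. Consequently your argument does not establish that the only cancelled candidates are those with $m\in\leftFpar \ell-(\frac{m_\mrm}{2}-1),\ell+(\frac{m_\mrm}{2}-1)\rightFpar$; a priori the second factor could kill poles at values of $m$ outside that interval, which would contradict the lemma's assertion that $\wt G_\ell$ has poles at \emph{all} such $m$.

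The paper closes exactly this point: it records the zeros of $\vartheta$ produced by the second factor, namely $m\in\Zb_{\geq 0}\cap\leftFpar -\ell-2\wt\rho_{\beta_1}-(\frac{m_\mrm}{2}-1),\,-\ell-2\wt\rho_{\beta_1}+(\frac{m_\mrm}{2}-1)\rightFpar$, observes from Table \ref{table tilde rho} that this set is nonempty only for $\SO_0(p,2)$ with $p\geq 6$ even, and then checks by an elementary inequality that in that case it equals $\leftFpar 0,-\ell-3+\frac{p}{2}\rightFpar$ and is contained in the interval already excluded by the first factor, so no additional candidates are removed. You need to supply this verification (or an equivalent one) to make the ``only if'' direction of the cancellation analysis complete. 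Two minor further points: the poles of $q_1$ alone are not ``exactly'' at $S$ --- $S$ is by definition the singular set of the product $p_1q_1$, since zeros of the polynomial $p_1$ cancel some poles of the cotangent factor $q_1$ --- and the correct factorization of the $k$-th factor for the choice $z\zeta=-L_m$ is $(m+\ell+2\wt\rho_{\beta_1}+\frac{m_\mrm}{2}-k)(m-\ell+\frac{m_\mrm}{2}-k)$, which only matches your displayed expression after the reindexing $k\mapsto m_\mrm-k$ of the whole product.
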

\begin{proof}
Formula \eqref{eq:tGell} is obtained using the lifts of $\cz^{-1}$ and $\sz\circ \cz^{-1}$,
as in \cite[Lemma 15]{HPP15}.  

The poles of $\wt G_{\ell}$ are the points $(z,\zeta)\in \M_\ell$
for which the function $\vartheta_z\big(\frac{i L_{\ell}}{z}-\zeta\big)
 p_1(iz\zeta) q_1(iz\zeta)$ is singular, i.e. the points for which $p_1(iz\zeta) q_1(iz\zeta)$ is singular and  $\vartheta_z\big(\frac{i L_{\ell}}{z}-\zeta\big)\neq 0$. By construction, 
 $p_1(iz\zeta) q_1(iz\zeta)$ is singular if and only if $iz\zeta\in S$, see \eqref{eq:Sj}. In this case, 
there exist $\epsilon\in \{\pm 1\}$ and $m\in \Zb$ so that $\zeta=\frac{\epsilon L_m}{z}$. Hence $\zeta^2=\frac{L_m^2}{z^2}$. Since $(z,\zeta)\in \M_\ell$, we also have 
$\zeta^2=-\frac{L_\ell^2}{z^2}-1$. Thus $z=\pm i \sqrt{ L_{\ell}^2+L_{m}^2}$.

We now compute $\vartheta_z\big(\frac{i L_{\ell}}{z}-\zeta\big)$ for such $(z,\zeta)$.
Set 
$$
w=\frac{i L_{\ell}}{z}-\zeta=\frac{i L_{\ell}}{z}-\frac{\epsilon L_m}{z}=
\frac{i L_{\ell}-\epsilon L_m}{\pm i \sqrt{ L_\ell^2+ L_m^2}}\,.
$$
Then 
$$
w^{-1}=\frac{\pm i \sqrt{L_\ell^2+ L_m^2}}{i L_{\ell}-\epsilon L_m}=
\frac{i L_{\ell}+\epsilon L_m}{\pm i \sqrt{L_\ell^2+ L_m^2}}\,.
$$
So,
\begin{eqnarray*}
\cz(w)&=&\frac{w+w^{-1}}{2}=\frac{L_{\ell}}{\pm \sqrt{ L_\ell^2+ L_m^2}}\,,\\
\cz(-iw)&=&\frac{w-w^{-1}}{2i}=\frac{\epsilon L_m}{\pm \sqrt{ L_\ell^2+ L_m^2}}\,.
\end{eqnarray*}
Hence
$$
z\cz(w)=iL_\ell\,, \qquad z\cz(-iw)=i\epsilon L_m\,, \qquad z\sz(w)=-\epsilon L_m\,.
$$
Substituting in \eqref{eq:vathetaz}, we obtain
\begin{equation}
\label{eq:vartheta-lm-1}
  \vartheta_z(w)
    =\frac{\big(L_m^2-L_\ell^2\big)}{b^2}\prod_{k=1}^{m_\mrm-1} \Big[ \big(\frac{-\epsilon L_m}{b}-\frac{m_\mrm}{2}+k\big)^2-\frac{L_\ell^2}{b^2}\Big]\,.
\end{equation}
The same argument used in the proof of Lemma \ref{lemma:functions-z} shows that the right-hand side of this equation is independent of $\epsilon\in \{\pm 1\}$. Using \eqref{eq:Lell}, we therefore obtain
\begin{eqnarray*}
\vartheta_z(w)&=& 
\big((\wt\rho_{\beta_1}+m)^2-(\wt \rho_{\beta_1}+\ell)^2\big)\prod_{k=1}^{m_\mrm-1} \big[ \big(\wt\rho_{\beta_1}+m-\frac{m_\mrm}{2}+k\big)^2-(\wt\rho_{\beta_1}+\ell)^2\big]\\
&=&(m-\ell)(m+\ell+2\wt\rho_{\beta_2}) 
  \prod_{k=1}^{m_\mrm-1} 
 \Big(m-\ell-\frac{m_\mrm}{2}+k\Big) \Big(m+\ell+2\wt\rho_{\beta_1}-\frac{m_\mrm}{2}+k\Big)\\
&=&(m-\ell)(m+\ell+2\wt\rho_{\beta_2}) 
  \prod_{h=-(\frac{m_\mrm}{2}-1)}^{\frac{m_\mrm}{2}-1} 
 \Big(m-\ell+h\Big) \Big(m+\ell+2\wt\rho_{\beta_1}+h\Big)\,.
\end{eqnarray*}
The values of $m\in \Zb_{\geq 0}$ making this polynomial vanish are:
\begin{eqnarray}
\label{eq:m=l}
&&m=\ell\,, \\
\label{eq:otherm}
&&m\in \Zb_{\geq 0} \cap \bigleftFpar \ell -\big(\tfrac{m_\mrm}{2}-1\big), 
\ell +\big(\tfrac{m_\mrm}{2}-1\big)\bigrightFpar\,,\\
&&m\in \Zb_{\geq 0} \cap \bigleftFpar -\ell-2\wt\rho_{\beta_1} -\big(\tfrac{m_\mrm}{2}-1\big),   -\ell -2\wt\rho_{\beta_1} + \big(\tfrac{m_\mrm}{2}-1\big)\bigrightFpar\,,
\end{eqnarray}
Observe that $-\ell -2\wt\rho_{\beta_1} + \big(\frac{m_\mrm}{2}-1\big)\geq 0$ if and only if
$(0\leq) \ell \leq  -2\wt\rho_{\beta_1} + \big(\frac{m_\mrm}{2}-1\big)$. Looking at the first two rows of Table \ref{table tilde rho}, we see that
this can happen if and only if $\G=\SO_0(p,2)$ with even $p\geq 6$. In this case, 
\begin{eqnarray*}
\bigleftFpar\ell -\big(\tfrac{m_\mrm}{2}-1\big),  
\ell +\big(\tfrac{m_\mrm}{2}-1\big)\bigrightFpar&=&\bigleftFpar\ell+2-\tfrac{p}{2}, \ell -2+\tfrac{p}{2}\bigrightFpar\\
\bigleftFpar -\ell-2\wt\rho_{\beta_1} -\big(\tfrac{m_\mrm}{2}-1\big),  
-\ell -2\wt\rho_{\beta_1} + \big(\tfrac{m_\mrm}{2}-1\big)\bigrightFpar
&=&\bigleftFpar -\ell+1-\tfrac{p}{2},-\ell-3+\tfrac{p}{2}\bigrightFpar.
\end{eqnarray*} 
Hence 
$$\Zb_{\geq 0} \cap \bigleftFpar -\ell+1-\tfrac{p}{2}, -\ell-3+\tfrac{p}{2}\bigrightFpar =
\bigleftFpar 0,-\ell-3+\tfrac{p}{2}\bigrightFpar$$
does not add zeros to those in \eqref{eq:otherm}.
In fact, $-\ell-3+\tfrac{p}{2}\leq \ell-2+\tfrac{p}{2}$ and, if $-\ell-3+\tfrac{p}{2}\geq 0$, i.e. 
$\ell\leq \tfrac{p}{2}-3$, then $\ell+2-\tfrac{p}{2}\leq 0$.
\end{proof}

For $\ell, m\in \Zb_{\geq 0}$, set
\begin{equation}
\label{eq:z-lm}
z_{\ell,m}=i \sqrt{L_\ell^2+L_m^2}
\end{equation}
and
\begin{equation}\label{2.9.5'}
\zeta_{\ell,m}= i \sqrt{\frac{L_m^2}{L_\ell^2+L_m^2}}\,.
\end{equation}
Let $\epsilon\in \{\pm 1\}$.
Then all points $(\pm z_{\ell,m},\epsilon\zeta_{\ell,m})$ are in $\M_\ell$. 
Open neighborhoods in $\M_\ell$ of these points
are the sets 
\begin{equation}
\label{eq:U1pm}
\Ug_{\ell,\pm}=\{(z,\zeta)\in \M_{\ell}\ ;\ \pm\Im z >0\}\,.
\end{equation}
and local charts on them are 
\begin{equation}\label{charts}
\kappa_{\ell,\pm}:\Ug_{\ell,\pm}\ni (z,\zeta)\to \zeta\in \C\setminus  i\big((-\infty, -1] \cup [1,+\infty)\big)\,, 
\end{equation}
inverted by setting $z=\pm i \,\frac{L_\ell}{\sqrt{\zeta^2+1}}$.

\begin{lem}\label{lem:chart-expressions}
The local expressions for $\wt G_\ell$ in terms of the charts (\ref{charts}) are
\begin{eqnarray}
\label{expression in terms of charts 1}
\big(\wt G_\ell\circ\kappa_{\ell,\pm}^{-1}\big)(\zeta) 
=\pm \frac{b}{\pi} \, L_{\ell}\,  p_1(i L_\ell)
p_2\Big(\tfrac{L_\ell\zeta}{\sqrt{\zeta^2+1}}\Big) q_1\Big(\tfrac{L_\ell\zeta}{\sqrt{\zeta^2+1}}\Big)
\psi^\vartheta_{i\,\frac{L_\ell}{\sqrt{\zeta^2+1}}}\big(\sqrt{\zeta^2+1}\mp \zeta\big)\,.
\end{eqnarray}
Suppose $m\in\Zb_{\geq 0} \setminus \bigleftFpar \ell -\big(\frac{m_\mrm}{2}-1\big), 
\ell +\big(\frac{m_\mrm}{2}-1\big)\bigrightFpar$. Then 
the residue of the local expression of $\wt G_\ell$ at a point $(z,\zeta)\in \M_\ell$ with 
$z=\pm z_{\ell,m}$ is
\begin{equation}\label{2.11.1}
\Res_{\zeta=\pm \zeta_{\ell,m}} (\wt G_\ell\circ \kappa_{\ell,\pm}^{-1})(\zeta)
=\pm \frac{1}{i\pi^2} C_{\ell,m} (f\times \varphi_{\frac{L_\ell \beta_1 +L_m\beta_2}{b}})(y)\,.
\end{equation}
In \eqref{2.11.1},
\begin{equation}
\label{eq:const-lm}
C_{\ell,m}=b L_\ell\,  p_1(i L_\ell) p_2(i L_m) \vartheta_0\Big(\frac{L_\ell}{b},\frac{L_m}{b}\Big)\,,
\end{equation}
where $\vartheta_0$ is as in \eqref{eq:vartheta0}, is a positive constant.
\end{lem}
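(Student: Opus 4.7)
I would prove both assertions by substituting the inverse chart into \eqref{eq:tGell} and then extracting the residue of the unique singular factor. Plugging $z=\pm i L_\ell/\sqrt{\zeta^2+1}$ into \eqref{eq:tGell} gives $iL_\ell/z=\pm\sqrt{\zeta^2+1}$ and $iz\zeta=\mp L_\ell\zeta/\sqrt{\zeta^2+1}$, so the argument of $\psi^\vartheta$ becomes $\pm\sqrt{\zeta^2+1}-\zeta$ while the argument of the factor $p_1q_1$ becomes $\mp L_\ell\zeta/\sqrt{\zeta^2+1}$. The evenness of $\psi^\vartheta_z$ in $z$ from Lemma~\ref{lemma:functions-z} lets me fix the first subscript to $iL_\ell/\sqrt{\zeta^2+1}$ in both charts. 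Explicit parity checks---$p_1$ is an even polynomial (its rank-one factorization has total degree $m_\rl+m_\rs-1$, even whenever $m_\rl$ is odd, which is the standing hypothesis) while $q_1(u)=\cot\big(\pi(iu/b-\wt\rho_{\beta_1})\big)$ is odd since $2\wt\rho_{\beta_1}\in\Zb$---together with $\psi^\vartheta_z(-w)=\psi^\vartheta_z(w)$ from Lemma~\ref{lemma:functions-z} then produce \eqref{expression in terms of charts 1} with the $\pm$ prefactor absorbed from the parities.

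For the residue formula, I would first observe that Lemma~\ref{2.8} guarantees that the only singular factor of $\wt G_\ell\circ\kappa_{\ell,\pm}^{-1}$ at $\zeta=\pm\zeta_{\ell,m}$ for $m$ in the admissible range is the cotangent factor $q_1(L_\ell\zeta/\sqrt{\zeta^2+1})$: the polynomial $p_1$ is entire, and the $\psi^\vartheta_z$ factor stays regular because the admissible range on $m$ is precisely the one ensuring that $\vartheta_z$ does not vanish. I would then compute the residue of $q_1(L_\ell\zeta/\sqrt{\zeta^2+1})$ by the change of variables $u=L_\ell\zeta/\sqrt{\zeta^2+1}$, whose derivative at $\zeta=\pm\zeta_{\ell,m}$ equals $\pm(L_\ell^2+L_m^2)^{3/2}/L_\ell^2$, combined with the elementary residue $\Res_{u=\pm iL_m}q_1(u)=-ib/\pi$ (coming from the explicit form of $q_1$ and the integrality of $2\wt\rho_{\beta_1}+m$).

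It remains to evaluate the regular factors at $\zeta=\pm\zeta_{\ell,m}$. A direct calculation shows $L_\ell\zeta_{\ell,m}/\sqrt{\zeta_{\ell,m}^2+1}=iL_m$, so $p_1$ contributes $p_1(iL_m)$. Splitting $\psi^\vartheta_z=\vartheta_z\cdot\psi_z$, the $\vartheta_z$ part evaluates to $\vartheta_0(L_\ell/b,L_m/b)$ by the same computation of $z\cz(w)$, $z\cz(-iw)$ and $z\sz(w)$ carried out inside the proof of Lemma~\ref{2.8}, while the $\psi_z$ part gives $(f\times\varphi_{(L_\ell\beta_1+L_m\beta_2)/b})(y)$ since at this point $z\cz(w)=iL_\ell$ and $z\cz(-iw)=\pm iL_m$ and $\varphi_\lambda$ is $W$-invariant in $\lambda$. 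Assembling all the factors yields \eqref{2.11.1} with $C_{\ell,m}$ as in \eqref{eq:const-lm}. The main obstacle of the argument is the strict positivity of $C_{\ell,m}$: this requires checking separately for $m\geq\ell+m_\mrm/2$ and $m\leq\ell-m_\mrm/2$ that $\vartheta_0(L_\ell/b,L_m/b)$ has a controllable sign via its explicit factorization \eqref{eq:vartheta0} in terms of $\wt\rho_{\beta_1}+\ell$ and $\wt\rho_{\beta_1}+m$, together with verifying that $p_1(iL_\ell)$ and $p_1(iL_m)$ are nonzero with predictable signs coming from the rank-one polynomial $P_1$; the individual signs must then combine with the positive factors $b$ and $L_\ell$ to yield a positive constant, which is a nontrivial but routine case-by-case verification using Tables~1 and~\ref{table tilde rho}.
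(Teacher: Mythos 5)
Your reconstruction of the residue computation itself is sound and follows the same route as the paper, which simply refers this part to the analogous computation in \cite[Lemma 16]{HPP15}: substitute the inverse chart into \eqref{eq:tGell}, use the parities from Lemma \ref{lemma:functions-z} to normalize the two charts, observe that only the cotangent factor $q_1$ is singular at $\zeta=\pm\zeta_{\ell,m}$ for $m$ outside $\bigleftFpar \ell-(\frac{m_\mrm}{2}-1),\ell+(\frac{m_\mrm}{2}-1)\bigrightFpar$, and evaluate the regular factors at $iz\zeta=\mp iL_m$, $z\cz(w)=iL_\ell$, $z\cz(-iw)=\pm iL_m$. That all matches.

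The gap is the positivity of $C_{\ell,m}$, which you flag as ``the main obstacle'' but then leave as an unexecuted ``routine case-by-case verification using Tables 1 and 2.'' This is precisely the one step the paper proves in detail, and it is the only genuinely new content of the lemma relative to \cite{HPP15}; moreover, you mislocate where the work lies. The factor $bL_\ell\,p_1(iL_\ell)\,p_1(iL_m)$ requires no new table-checking: its positivity is inherited from the rank-one analysis of \cite{HPP15} (it is the product of the strictly positive rank-one residue constants). What must be proved here is $\vartheta_0\big(\frac{L_\ell}{b},\frac{L_m}{b}\big)>0$, and the argument is not a table computation but a parity count on the factorization
\begin{equation*}
 \vartheta_0\Big(\frac{L_\ell}{b},\frac{L_m}{b}\Big)
    =\frac{L_m^2-L_\ell^2}{b^2}\prod_{k=1}^{m_\mrm-1}
    \Big(\frac{L_m}{b}-\big(\frac{m_\mrm}{2}-k\big)-\frac{L_\ell}{b}\Big)
    \Big(\frac{L_m}{b}-\big(\frac{m_\mrm}{2}-k\big)+\frac{L_\ell}{b}\Big)\,.
\end{equation*}
If $m>\ell+(\frac{m_\mrm}{2}-1)$ every factor is positive. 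If $m<\ell-(\frac{m_\mrm}{2}-1)$, the factors $\frac{L_m}{b}-(\frac{m_\mrm}{2}-k)+\frac{L_\ell}{b}$ are still positive, while $L_m^2-L_\ell^2$ and all $m_\mrm-1$ factors $\frac{L_m}{b}-(\frac{m_\mrm}{2}-k)-\frac{L_\ell}{b}$ are negative; that makes exactly $m_\mrm$ negative factors, and $m_\mrm$ is even, so the product is positive. Without this count (or an equivalent sign analysis) the claim that $C_{\ell,m}>0$ --- which is used later to guarantee that the residues \eqref{eq:compresF} do not cancel --- remains unproved.
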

\begin{proof}
The computation of the residues is as in \cite[Lemma 16]{HPP15}. 
The constant  $\vartheta_0\big(\frac{L_\ell}{b},\frac{L_m}{b}\big)$ agrees with \eqref{eq:vartheta-lm-1} with $(z,\zeta)=(z_{\ell,m},\epsilon \zeta_{\ell,m})$,
and we only need to prove that it is positive. Recall that \eqref{eq:vartheta-lm-1} is independent of $\epsilon$. Hence
\begin{equation}
\label{eq:vartheta-lm-2}
 \vartheta_0\Big(\frac{L_\ell}{b},\frac{L_m}{b}\Big)
    =\frac{\big(L_m^2-L_\ell^2\big)}{b^2}\prod_{k=1}^{m_\mrm-1} 
    \Big(\frac{L_m}{b}-\big(\frac{m_\mrm}{2}-k\big)-\frac{L_\ell}{b}\Big)
    \Big(\frac{L_m}{b}-\big(\frac{m_\mrm}{2}-k\big)+\frac{L_\ell}{b}\Big)\,.
\end{equation}
If $m> \ell+(\frac{m_\mrm}{2}-1\big)\geq \ell$, then all factors appearing in the above product are positive. If $m< \ell-(\frac{m_\mrm}{2}-1\big)\leq \ell$, then all factors $\frac{L_m}{b}-\big(\frac{m_\mrm}{2}-k\big)+\frac{L_\ell}{b}$ are positive, whereas $L_m^2-L_\ell^2$ as well as the $m_\mrm-1$ factors $\frac{L_m}{b}-\big(\frac{m_\mrm}{2}-k\big)-\frac{L_\ell}{b}$
are negative. Since $m_\mrm$ is even, we conclude that $\vartheta_0\big(\frac{L_\ell}{b},\frac{L_m}{b}\big)>0$ in all cases. 
\end{proof}

A different parametrization of the singularities of $\wt G_\ell$ will turn out to be more convenient. 
Observe first that, by \eqref{eq:rho} and \eqref{eq:wtrhob}, 
$$\wt \rho_{\beta_1}=\rho_{\beta_1}=\rho_{\beta_2}-\frac{m_\mrm}{2}\,.$$
We will use the following notation for $(\ell_1,\ell_2)\in \Zb_{\geq 0}^2$:
\begin{equation}
\label{eq:lambdal1l2}
\lambda(\ell_1,\ell_2)=(\rho_{\beta_1}+\ell_1)\beta_1+(\rho_{\beta_2}+\ell_2)\beta_2=
\frac{1}{b}\big( L_{\ell_1} \beta_1+L_{\ell_2+\frac{m_\mrm}{2}}\beta_2\big).
\end{equation}  

\begin{cor}
\label{cor:singGell}
Keep the notation of Lemma \ref{lem:chart-expressions}.
If $\ell\in\leftFpar 0,\frac{m_\mrm}{2}-1\rightFpar$, then $\wt G_\ell$ has simple poles at the points $(z,\zeta)\in \M_\ell$ with $z=\pm i|z|$ and
\begin{equation}
\label{eq:singular1}
b^{-2}|z|^2=(\rho_{\beta_1}+\ell)^2 +(\rho_{\beta_2}+\ell+k)^2 \qquad (k\in \Zb_{\geq 0})\,. 
\end{equation}
If $\ell\in\frac{m_\mrm}{2}+\Zb_{\geq 0}$, then $\wt G_\ell$ has simple poles at the points $(z,\zeta)\in \M_\ell$ with $z=\pm i|z|$ and satisfying either \eqref{eq:singular1} or
\begin{equation}
\label{eq:singular2}
b^{-2}|z|^2=(\rho_{\beta_1}+m)^2 +(\rho_{\beta_2}+\ell_0)^2 \qquad 
(m\in\leftFpar 0,\ell_0\rightFpar )\,,
\end{equation}
where $\ell_0=\ell-\frac{m_\mrm}{2}$.

The residue of the local expression of $\wt G_\ell$ at a point $(z,\zeta)\in \M_\ell$ with $z=\pm i |z|$ satisfying \eqref{eq:singular1} is 
\begin{equation}
\label{eq:resGell1}
\Res_{\zeta=\pm \zeta_{\ell,\ell+\frac{m_\mrm}{2}+k}} (\wt G_\ell\circ \kappa_{\ell,\pm}^{-1})(\zeta)
=\pm \frac{1}{i\pi^2} C_{\ell,\ell+\frac{m_\mrm}{2}+k} \big(f\times \varphi_{\lambda(\ell,\ell+k)}\big)(y)\,.
\end{equation}
The residue of the local expression of $\wt G_\ell$ at a point $(z,\zeta)\in \M_\ell$ with $z=\pm i |z|$ satisfying \eqref{eq:singular2} is 
\begin{equation}
\label{eq:resGell2}
\Res_{\zeta=\pm \zeta_{\ell,m}} (\wt G_\ell\circ \kappa_{\ell,\pm}^{-1})(\zeta)
=\pm \frac{1}{i\pi^2} C_{\ell,m} \big(f\times \varphi_{\lambda(m,\ell_0)}\big)(y)\,.
\end{equation}
\end{cor}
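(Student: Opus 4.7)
The corollary is essentially a careful reparametrization of the pole data already provided by Lemma \ref{2.8} and Lemma \ref{lem:chart-expressions}. The plan is to split the allowed values of the index $m$ into two regimes and translate everything from the $(\ell,m)$-notation into the $\lambda(\ell_1,\ell_2)$-notation of \eqref{eq:lambdal1l2}.

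First, I would recall from Lemma \ref{2.8} that the poles of $\wt G_\ell$ are exactly the points $(z,\zeta)\in\M_\ell$ with $z=\pm i\sqrt{L_\ell^2+L_m^2}$, for $m\in\Zb_{\geq 0}$ lying \emph{outside} the interval $\bigleftFpar\ell-(\tfrac{m_\mrm}{2}-1),\,\ell+(\tfrac{m_\mrm}{2}-1)\bigrightFpar$. Such an $m$ satisfies either $m\geq\ell+\tfrac{m_\mrm}{2}$, in which case I write $m=\ell+\tfrac{m_\mrm}{2}+k$ with $k\in\Zb_{\geq 0}$, or $m\leq\ell-\tfrac{m_\mrm}{2}$, which can occur only when $\ell\geq\tfrac{m_\mrm}{2}$ and then yields $m\in\leftFpar 0,\ell_0\rightFpar$ with $\ell_0=\ell-\tfrac{m_\mrm}{2}$. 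This explains the dichotomy between the $\ell\in\leftFpar 0,\tfrac{m_\mrm}{2}-1\rightFpar$ and $\ell\in\tfrac{m_\mrm}{2}+\Zb_{\geq 0}$ cases in the statement.

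Next, using $L_j=b(\wt\rho_{\beta_1}+j)=b(\rho_{\beta_1}+j)$ together with $\rho_{\beta_2}=\rho_{\beta_1}+\tfrac{m_\mrm}{2}$ (which follows from \eqref{eq:rho} and \eqref{eq:wtrhob}), I obtain $L_{\ell+\frac{m_\mrm}{2}+k}=b(\rho_{\beta_2}+\ell+k)$ in the first case, so
\[
b^{-2}\,|z|^2=b^{-2}(L_\ell^2+L_m^2)=(\rho_{\beta_1}+\ell)^2+(\rho_{\beta_2}+\ell+k)^2,
\]
which is \eqref{eq:singular1}. In the second case, $L_\ell=b(\rho_{\beta_2}+\ell_0)$, so
\[
b^{-2}\,|z|^2=(\rho_{\beta_2}+\ell_0)^2+(\rho_{\beta_1}+m)^2,
\]
which is \eqref{eq:singular2}. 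Note that when $\ell\in\leftFpar 0,\tfrac{m_\mrm}{2}-1\rightFpar$ only the first case can arise, while when $\ell\geq\tfrac{m_\mrm}{2}$ both cases contribute, matching the statement.

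Finally, to obtain \eqref{eq:resGell1} and \eqref{eq:resGell2} I plug the two parametrizations into \eqref{2.11.1} from Lemma \ref{lem:chart-expressions}. The spectral parameter in that residue is $\tfrac{1}{b}(L_\ell\beta_1+L_m\beta_2)$. Substituting $m=\ell+\tfrac{m_\mrm}{2}+k$ gives $(\rho_{\beta_1}+\ell)\beta_1+(\rho_{\beta_2}+\ell+k)\beta_2=\lambda(\ell,\ell+k)$, yielding \eqref{eq:resGell1} directly. For the second case the naive substitution produces $(\rho_{\beta_1}+\ell)\beta_1+(\rho_{\beta_1}+m)\beta_2$, which differs from $\lambda(m,\ell_0)=(\rho_{\beta_1}+m)\beta_1+(\rho_{\beta_2}+\ell_0)\beta_2$; here I would invoke the $\W$-invariance of the spherical function $\varphi_\lambda$: the Weyl group of $BC_2/C_2$ contains the reflection in $(\beta_2-\beta_1)/2$, which permutes $\beta_1$ and $\beta_2$, and using $\rho_{\beta_2}+\ell_0=\rho_{\beta_1}+\ell$ this swap identifies the two parameters. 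This is the only mildly subtle step; once it is observed, \eqref{eq:resGell2} follows from \eqref{2.11.1}. The constants $C_{\ell,\ell+\frac{m_\mrm}{2}+k}$ and $C_{\ell,m}$ carry over verbatim from \eqref{eq:const-lm}, so no further computation is needed.
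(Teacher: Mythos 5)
Your proposal is correct and follows essentially the same route as the paper's proof: split the admissible $m$ from Lemma \ref{2.8} into $m=\ell+\tfrac{m_\mrm}{2}+k$ and $m\in\leftFpar 0,\ell_0\rightFpar$, rewrite $L_\ell, L_m$ via $\wt\rho_{\beta_1}=\rho_{\beta_1}=\rho_{\beta_2}-\tfrac{m_\mrm}{2}$, and in the second case use the Weyl group transposition of $\beta_1$ and $\beta_2$ to identify the spherical function parameter with $\lambda(m,\ell_0)$. No gaps.
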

\begin{proof}
We have $\ell\in\leftFpar 0,\frac{m_\mrm}{2}-1\rightFpar$ if and only if $0\in
\leftFpar \ell-\big(\frac{m_\mrm}{2}-1\big),\ell+\big(\frac{m_\mrm}{2}-1\big)\rightFpar$. 
In this case, $m\in \Zb_{\geq 0}\setminus \leftFpar \ell-\big(\frac{m_\mrm}{2}-1\big),\ell+\big(\frac{m_\mrm}{2}-1\big)\rightFpar=\ell + \frac{m_\mrm}{2}+ \Zb_{\geq 0}$
is of the form $m= \ell + \frac{m_\mrm}{2}+k$ with $k\in \Zb_{\geq 0}$. Hence
$\frac{L_\ell}{b}=\rho_{\beta_1}+\ell$ and $\frac{L_m}{b}=\wt \rho_{\beta_1}+\frac{m_\mrm}{2}+ \ell+k=\rho_{\beta_2}+\ell+k$. 

On the other hand, if  $\ell\in\frac{m_\mrm}{2}+\Zb_{\geq 0}$ and $m\in \Zb_{\geq 0}\setminus \leftFpar \ell-\big(\frac{m_\mrm}{2}-1\big),\ell+\big(\frac{m_\mrm}{2}-1\big)\rightFpar$, then 
either $m\in \ell + \frac{m_\mrm}{2}+ \Zb_{\geq 0}$ (and the above applies), or $m \in 
\leftFpar 0,\ell_0\rightFpar$. In the latter case, $\frac{L_\ell}{b}= \wt\rho_{\beta_1}+\frac{m_\mrm}{2}+\ell_0=\rho_{\beta_2}+\ell_0$ and 
$\frac{L_m}{b}=\rho_{\beta_1}+m$. Observe also that 
$\varphi_{\lambda(\ell_0,m)}=\varphi_{\lambda(m,\ell_0)}$
by $W$-invariance.
\end{proof}

\begin{figure}
\includegraphics[trim = 10mm 100mm 10mm 100mm, clip, width=16cm]{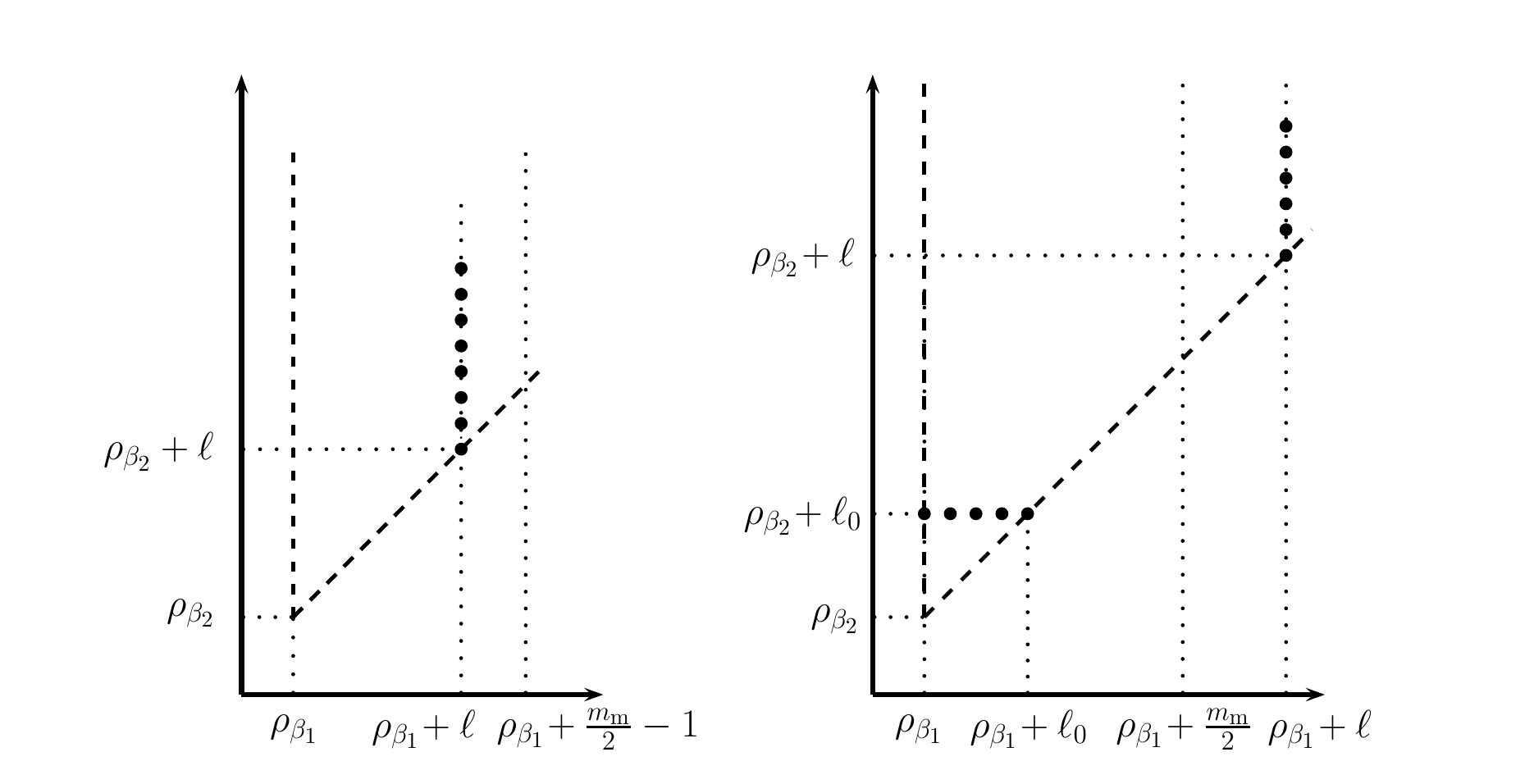} 

\caption{On the left: $\lambda(\ell,\ell+k)$ for $\ell\in \leftFpar 0,\frac{m_{\rm m}}{2}\rightFpar$. 
On the right: $\lambda(\ell,\ell+k)$ and $\lambda(m,\ell_0)$ for $\ell\geq \frac{m_{\rm m}}{2}$}

\end{figure}


We now proceed with the piecewise extension of $F$ along the negative imaginary half-line 
$-i[L,+\infty)$.  Recall from Corollary \ref{cor:SjrvW} that for $v\in I_{n}=[L_{n},L_{n+1})$ with $n\in \Zb_{\geq 0}$ there exists
$0< r_v <1$ and an open neighborhood $W_v$ of $-iv$ in $\C$ such that
\begin{equation}
\label{eq:Fresidues1}
F(z)=F_{r_v}(z)+ 4\sum_{\ell=0}^{n} G_{\ell}(z)
\qquad (z\in W_v\setminus i\R)\, ,
\end{equation}
where the function $F_{r_v}$ is holomorphic in $W_v$. This equality extends then to $I_{-1}=(0,L)$ by allowing empty sums. By possibly shrinking $W_v$, we may also assume that $W_v$ is an open disk around $-iv$ such that
$$
W_v\cap i\R \subseteq
\begin{cases}
  -iI_{n} &\text{for $v\in I_n^\circ$},\\
  -i(I_{n}-\frac{b}{2}) &\text{for $v=L_n$}\,.
\end{cases}
$$
In addition, for $0<v < L$ we define $W_v$ to be an open ball around $-iv$ in $\C$ such that $W_{v}\cap i\R\subset (0,L)$.
If $v\in I_n$, $v'\geq L$ and $W_v\cap W_{v'}\not=\emptyset$, then we obtain for $z\in W_v\cap W_{v'}$
$$
F_{r_{v'}}(z)=F_{r_v}(z)+
\begin{cases}
  0& \text{if $v'\in I_n$},\\
  4\, G_n(z) &\text{if $v'\in I_{n-1}$}. 
\end{cases}
$$
Now we set
\begin{eqnarray*}
W_{(-1)}=\bigcup_{v\in I_{-1}} W_v  \quad &\text{and} & \quad
W_{(n)}=\bigcup_{v\in I_n} W_v \qquad (n\in \Zb_{\geq 0})\,.
\end{eqnarray*}
For $n\in\Zb_{\geq 1}$ we define a holomorphic function $F_{(n)}: W_{(n)}\to \C$ by
\[
F_{(n)}(z)= \begin{cases}
F_{r_v}(z) &\text{if $n\in \Zb_{\geq 0}$, $v\in I_n$ and $z\in W_v$}\\
F(z) &\text{if $n=-1$ and $z\in W_{(-1)}$}\,.
\end{cases}
\]
We therefore obtain the following analogue of \cite[Proposition 18]{HPP15}.

\begin{pro}
\label{cor:FextendedMainSection}
For every integer $n\in \Zb_{\geq -1}$ we have
 \begin{equation}
\label{eq:Fresidues2}
F(z)=F_{(n)}(z)+4\sum_{\ell=0}^{n} G_{\ell}(z)
\qquad (z\in W_{(n)}\setminus i\R)\,,
\end{equation}
where $F_{(n)}$ is holomorphic in $W_{(n)}$, the $G_\ell$ are as in \eqref{eq:Gell}, and empty sums are defined to be equal to $0$.
\end{pro}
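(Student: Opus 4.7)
The plan is to proceed by cases on $n \in \Zb_{\geq -1}$. The case $n = -1$ is immediate from the definitions: $F_{(-1)} = F$ on $W_{(-1)}$, the empty sum on the right-hand side of \eqref{eq:Fresidues2} is zero, and $F$ is holomorphic on $W_{(-1)}$ because each $W_v$ with $v \in I_{-1} = (0,L)$ sits inside the domain of holomorphy furnished by Lemma \ref{lemma:holoextF}. For $n \in \Zb_{\geq 0}$, I would build $F_{(n)}$ by gluing together the local holomorphic functions $F_{r_v}$ on the disks $W_v$ supplied by Proposition \ref{2.5} and Corollary \ref{cor:SjrvW}.

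The heart of the argument is the compatibility of these local pieces. For each $v \in I_n$ we have the local identity \eqref{eq:Fresidues1},
\[F(z) = F_{r_v}(z) + 4 \sum_{\ell=0}^{n} G_\ell(z) \qquad (z \in W_v \setminus i\R),\]
where $F_{r_v}$ is holomorphic on all of $W_v$ by the last sentence of Proposition \ref{2.5} (applicable thanks to Corollary \ref{cor:SjrvW}(1)). Crucially, the integer $n$ appearing in the sum does not depend on the particular $v \in I_n$, by Corollary \ref{cor:SjrvW}(3). Hence, whenever $v, v' \in I_n$ satisfy $W_v \cap W_{v'} \neq \emptyset$, both $F_{r_v}$ and $F_{r_{v'}}$ coincide with $F(z) - 4 \sum_{\ell=0}^{n} G_\ell(z)$ on $(W_v \cap W_{v'}) \setminus i\R$.

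The remainder is a routine appeal to the identity theorem: $W_v \cap W_{v'}$ is open in $\C$ and $(W_v \cap W_{v'}) \setminus i\R$ is a dense open subset on which the two functions holomorphic on $W_v \cap W_{v'}$ already agree; hence they are equal throughout $W_v \cap W_{v'}$. It follows that setting $F_{(n)}(z) := F_{r_v}(z)$ for $v \in I_n$ and $z \in W_v$ yields a well-defined holomorphic function on $W_{(n)} = \bigcup_{v \in I_n} W_v$, and \eqref{eq:Fresidues2} then holds on $W_{(n)} \setminus i\R$ by construction. The only real subtlety is the overlap consistency, which reduces to the constancy of $N_v$ on $I_n$ provided by Corollary \ref{cor:SjrvW}(3); beyond this bookkeeping I do not anticipate any serious obstacle, since no new residue contribution appears when $v$ moves within a single interval $I_n$.
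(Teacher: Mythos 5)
Your proposal is correct and follows essentially the same route as the paper, which embeds the proof in the construction immediately preceding the proposition: the local identities \eqref{eq:Fresidues1} from Corollary \ref{cor:SjrvW}, the constancy of $N_v=n$ on $I_n$, and the resulting overlap compatibility $F_{r_v}=F_{r_{v'}}$ on $W_v\cap W_{v'}$ (extended from the dense open set off $i\R$ by holomorphy), which makes the glued $F_{(n)}$ well defined and holomorphic on $W_{(n)}$. No gap.
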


We can continue $F$ across $-i(0,+\infty)$ inductively, as in the case of the direct product of two rank one symmetric spaces in \cite{HPP15}. Our specific case $\X_1=\X_2$ is slightly easier, as for instance one gets just one regularly spaced sequence of branching points $L_\ell$. Since the procedure does not involve new steps, we will limit ourself to overview the different parts and state the final result, referring the reader to \cite{HPP15} for the details. 
 
For a fixed positive integer $N$, we construct a Riemann surface $\M_{(N)}$ by ``pasting together'' the Riemann surfaces $\M_\ell$ to which all functions $G_\ell$, with $\ell=0,1,\dots,N$, admit meromorphic extension. Namely, we set
\begin{equation}
\label{eq:MN}
\M_{(N)}
=\left\{(z,\zeta)\in \C^- \times \C^{N+1} ; \zeta=(\zeta_0,\dots, \zeta_N),\
(z,\zeta_\ell)\in \M_\ell, \ \ell\in\Zb_{\geq 0}, \ 0\leq \ell\leq N\right\}.
\end{equation}
Then $\M_{(N)}$ is a Riemann surface, and the map
\begin{eqnarray}\label{finite cover 1}
&&\pi_{(N)}: \M_{(N)} \ni(z,\zeta)\to z\in \C^-
\end{eqnarray}
is a holomorphic $2^{N+1}$-to-$1$ cover, except when
$z=-i L_\ell$ for some $\ell\in\Zb_{\geq 0}$ with $0\leq \ell\le N$.
The fiber above each of these elements $-i L_\ell$ consists of $2^N$ branching points of $\M_{(N)}$. A choice of square root function $\zeta_\ell^+(z)$, see \cite[(81)]{HPP15}, 
for every coordinate function $\zeta_\ell$ on $\M_{(N)}$ yields a section
\[
\sigma_{(N)}^+: z\to (z,\zeta_0^+(z),\dots,\zeta_N^+(z))
\]
of the projection $\pi_{(N)}$. All possible sections of $\pi_{(N)}$ are obtained by choosing a sign $\pm\zeta_\ell^+$ for each coordinate function. We obtain in this way a parametrization of all 
sections of $\pi_{(N)}$ by means of elements $\varepsilon=(\varepsilon_0,\dots,\varepsilon_N)\in \{\pm 1\}^{N+1}$.

For $0\leq \ell\leq N$ consider the holomorphic projection
\begin{eqnarray}\label{finite cover pi}
&&\pi_{(N,\ell)}: \M_{(N)} \ni(z,\zeta)\to (z,\zeta_\ell)\in \M_\ell.
\end{eqnarray}
Then the meromorphic function
\begin{equation}
\label{eq:wtGNell}
\wt G_{(N,\ell)}=\wt G_{\ell}\circ \pi_{(N,\ell)}: \M_{(N)}\to \C
\end{equation}
is holomorphic on ${(\pi_{(N)})}^{-1}(\C^-\setminus i\R)$. Moreover, on $\C^-\setminus i\R$,
\[
\wt G_{(N,\ell)}\circ  \sigma_{(N)}^+=G_{\ell}\,.
\]
So,  $\wt G_{(N,\ell)}$ is the meromorphic extension of a lift of $G_{\ell}$ to $\M_{(N)}$.
Using the right-hand side of \eqref{eq:Fresidues2} with $F_{(n)}$ constant on the $z$-fibers, we   obtain a lift of $F$ to $\pi_{(N)}^{-1}(W_{(n)}\setminus i\R)$.

The next step is to ``glue together'' all these local meromorphic extensions of $F$, moving from branching point to branching point, to get a meromorphic extension of $F$ along the branched curve $\gamma_N$ in $\M_{(N)}$ covering the interval $-i(0,L_{N+1})$.
Define, as in \cite[section 4.3]{HPP15}, the open sets $U_{n,\eps}$, $U_{\eps(n^\vee)}$  (with $n\in \Zb_{\geq 0}$, $\eps\in \{\pm 1\}^{N+1}$) and the open neighborhood $\M_{\gamma_N}$ of $\gamma_N$ in $\M_{(N)}$. Every open set $U_{\eps(n^\vee)}\cup U_{n,\eps}$ is a homeomorphic lift to $\M_{(N)}$ of the neighborhood $W_{(n)}$ of $-[L_n,L_{n+1})$.
Then we have the following analogue of \cite[Theorem 19]{HPP15}.

\begin{thm}
\label{thm:meroliftF}
For $n\in \{-1,0,\dots,N\}$, $\eps \in \{\pm 1\}^{N+1}$ and $(z,\zeta) \in U_{\eps(n^\vee)}\cup U_{n,\eps}$ define
\begin{equation}
\label{eq:widetildeF}
\wt F(z,\zeta)=
\displaystyle{F_{(n)}(z)+4 \sum_{\ell=0}^n\wt G_{(N,\ell)}(z,\zeta)+4
\sum_{\stackrel{n<\ell\leq N}{\text{with $\eps_\ell=-1$}}} \big[\wt G_{(N,\ell)}(z,\zeta)-
\wt G_{(N,\ell)}(z,-\zeta) \big]}\,,
 \end{equation}
where the first sum is equal to $0$ if $\ell=-1$ and the second sum is $0$ if $\eps_\ell=1$ for all $\ell>n$.
Then $\wt F$ is the meromorphic extension of a lift of $F$ to the open neighborhood
$\M_{\gamma_N}$ of the branched curve $\gamma_{N}$
lifting $-i(0,L_{N+1})$ in $\M_{(N)}$.
\end{thm}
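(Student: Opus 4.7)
The plan is to lift the local formula \eqref{eq:Fresidues2} for $F$ sheet by sheet across the Riemann surface $\M_{(N)}$, tracking how the lift must be corrected each time the lifted path crosses a branching point above some $-iL_\ell$. Meromorphy of the resulting $\wt F$ will then be immediate: each $\wt G_{(N,\ell)}$ is meromorphic on $\M_{(N)}$ by Lemma \ref{2.8}, while each $F_{(n)}$ is holomorphic on $W_{(n)}$ by Proposition \ref{cor:FextendedMainSection}.

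First, on $W_{(n)}\setminus i\R$ the formula $F(z)=F_{(n)}(z)+4\sum_{\ell=0}^n G_\ell(z)$ holds by \eqref{eq:Fresidues2}. On the distinguished section $\sigma_{(N)}^+$ we have $\wt G_{(N,\ell)}\circ\sigma_{(N)}^+=G_\ell$ by construction, so the formula \eqref{eq:widetildeF} with $\eps=(+1,\dots,+1)$, evaluated along this section, reproduces $F$. This yields the lift on the portion of $\M_{(N)}$ covered by $U_{n,\eps}\cup U_{\eps(n^\vee)}$ for that particular $\eps$. For an arbitrary $\eps$, the set $U_{n,\eps}\cup U_{\eps(n^\vee)}$ is reached from the distinguished one by paths in $\M_{(N)}$ that cross the branch cut of the $\ell$-th factor precisely for each $\ell$ with $\eps_\ell=-1$. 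On each $\M_\ell$, the local charts \eqref{charts} realize the monodromy of $\wt G_\ell$ around $\pm iL_\ell$ as $\zeta\mapsto -\zeta$. For $\ell\leq n$ the term $G_\ell$ already appears in \eqref{eq:Fresidues2}, and lifting through $\wt G_{(N,\ell)}$ automatically records the correct branch; for $\ell>n$ the term $G_\ell$ does not appear in \eqref{eq:Fresidues2}, and having crossed the branch cut of $\M_\ell$ forces the correction $\wt G_{(N,\ell)}(z,\zeta)-\wt G_{(N,\ell)}(z,-\zeta)$ to be added, exactly as in \eqref{eq:widetildeF}.

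The main obstacle is to verify consistency of $\wt F$ on overlaps of pairs $U_{n,\eps}\cup U_{\eps(n^\vee)}$ as $(n,\eps)$ varies. Two situations occur. Case (i): within the same layer $n$, with tuples $\eps,\eps'$ differing only in positions $\ell>n$. The correction $\wt G_{(N,\ell)}(z,\zeta)-\wt G_{(N,\ell)}(z,-\zeta)$ is odd in $\zeta_\ell$, so flipping $\eps_\ell$ while simultaneously swapping the sheet of $\zeta_\ell$ leaves $\wt F$ invariant. Case (ii): between adjacent layers $n$ and $n+1$. On the overlap $W_{(n)}\cap W_{(n+1)}\setminus i\R$ the transition identity established immediately before Proposition \ref{cor:FextendedMainSection} gives $F_{(n+1)}-F_{(n)}=-4\,G_{n+1}$; pulled back to $\M_{(N)}$, this absorbs a single correction term $\wt G_{(N,n+1)}$ when the running index is incremented from $n$ to $n+1$ or, equivalently, when $\eps_{n+1}$ flips sign. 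Combining (i) and (ii) shows that the assignments in \eqref{eq:widetildeF} patch to a single well-defined meromorphic function on $\M_{\gamma_N}$, and by construction this function restricts on $\sigma_{(N)}^+$ over $\bigcup_n(W_{(n)}\setminus i\R)$ to $F$.
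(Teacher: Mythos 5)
The paper itself gives no proof of Theorem \ref{thm:meroliftF}: it explicitly defers to [HPP15, Theorem 19], noting that the only change is the replacement of $\psi_z$ by $\psi_z^\vartheta$ (whose symmetry properties are secured by Lemma \ref{lemma:functions-z} and whose effect on the polar set is controlled by Lemma \ref{2.8}). Your sketch follows exactly that gluing strategy, and your ingredients are the right ones: term-by-term meromorphy, restriction to $F$ on the distinguished section via \eqref{eq:Fresidues2}, the transition identity $F_{(n+1)}-F_{(n)}=-4G_{n+1}$, and the deck transformation $\zeta_\ell\mapsto-\zeta_\ell$.

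One step is argued imprecisely, namely the origin of the third sum in \eqref{eq:widetildeF}. For $\ell>n$ the branch point $-iL_\ell$ does not lie above $W_{(n)}$, so the coordinate $\zeta_\ell$ does not branch there and the sets labelled by different values of $\eps_\ell$ sit on disjoint components of $\pi_{(N)}^{-1}(W_{(n)})$; your case (i) check (``flipping $\eps_\ell$ while simultaneously swapping the sheet of $\zeta_\ell$ leaves $\wt F$ invariant'') compares values at deck-related points rather than at a common point, which is not the relevant well-definedness condition -- and in fact there is no overlap to check between those sheets. The real content is how a sheet over $W_{(n)}$ with $\eps_\ell=-1$ for some $\ell>n$ is reached from the branched curve: one must first descend past $-iL_\ell$, where \eqref{eq:Fresidues2} at level $\ell$ applies, and then return to level $n$ around the branch point. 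Telescoping the transition identities $F_{(j)}-F_{(j+1)}=4G_{j+1}$ for $j=n,\dots,\ell-1$ and using that $\wt G_{(N,j)}(z,-\zeta)$ restricts to $G_j(z)$ on a sheet with $\eps_j=-1$ (while $\wt G_{(N,j)}(z,\zeta)=G_j(z)$ on a sheet with $\eps_j=+1$, so those terms cancel) produces exactly the summand $4\big[\wt G_{(N,\ell)}(z,\zeta)-\wt G_{(N,\ell)}(z,-\zeta)\big]$. You possess both ingredients -- the monodromy and the transition identity -- but distribute them between your $\ell>n$ discussion and your case (ii) without assembling them into this computation; once that is done, and case (i) is replaced by the observation that the formulas for $(n,\eps)$ and $(n,\eps')$ with $\eps'$ differing from $\eps$ only in the $n$-th entry literally coincide, the argument is complete and agrees with the one the paper invokes.
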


Order the singularities according to their distance from the origin $0\in \C$, and let
$\{z_{(h)}\}_{h\in \Zb_{\geq 0}}$ be the resulting ordered sequence. For a fixed $h\in \Zb_{\geq 0}$  set
\begin{equation}
\label{eq:Sh}
 S_{h}=\{\ell\in \Zb_{\geq 0}; \ \text{$\exists k \in \Zb_{\geq 0}$ so that 
$b^{-2}|z_{(h)}|^2=(\rho_{\beta_1}+\ell)^2 +(\rho_{\beta_2}+\ell+k)^2$}\}\,.
\end{equation}
Notice that if $\ell\in S_h$, then the corresponding element $k$ is uniquely determined. 
Let $N\in \Zb_{\geq 0}$ be such that $|z_{(h)}|<L_{N+1}$ and $n\in \leftFpar 0, N\rightFpar$ such that $|z_{(h)}|\in [L_n,L_{n+1})$. 
Then the possible singularities of $\wt F$ at points of $\M_{(N)}$ above $z_{(h)}$ are those of 
\[
\sum_{\ell=0}^n \wt G_{(N,\ell)}(z,\zeta)=\sum_{\ell=0}^n \wt G_\ell(z,\zeta_\ell)\,.
\]
Indeed, the singularities of $\wt G_{(N,\ell)}(z,\zeta)=\wt G_\ell(z,\zeta_\ell)$ occur at 
points $(z,\zeta)\in \M_{(N)}$ with $|z|^2=L_\ell^2+L_m^2>L_\ell^2$. Hence the second sum on the right-hand side of \eqref{eq:widetildeF} is holomorphic on $U_{\eps(n^\vee)}\cup U_{n,\eps}$.

The singular points of $\wt F$ above $z_{(h)}$ are parametrized by $\varepsilon\in \{\pm 1\}^{N+1}$. We denote by $(z_{(h)}, \zeta^{(h,\varepsilon)})$ the one in $U_{\eps(n^\vee)}\cup U_{n,\eps}$. The local expression of $\wt F$ on $U_{\eps(n^\vee)}\cup U_{n,\eps}$ is computed in terms of the chart $\kappa_{n,\eps}$ defined for $(z,\zeta) \in U_{\eps(n^\vee)}\cup U_{n,\eps}$ by $\kappa_{n,\eps}(z,\zeta)=\zeta_n$. 

Suppose $\wt G_{(N,\ell)}(z,\zeta)$ is singular at $(z_{(h)}, \zeta^{(h,\varepsilon)})$. Then, by \cite[Proposition 21]{HPP15}, 
\begin{equation}
\label{eq:compres1}
\Res_{\zeta_n=\zeta^{(h,\eps)}_n} \big( \wt G_{(N,\ell)}
\circ \kappa_{n,\eps}^{-1}\big)(\zeta_n)
=
\eps_\ell\eps_n \;\frac{L_n^2}{L_\ell^2}  \; \frac{\sqrt{|z_{(h)}|^2-L_\ell^2}}{\sqrt{|z_{(k)}|^2-L_n^2}}\; \Res_{\zeta_\ell=\zeta^{(h,\eps)}_\ell} \big( \wt G_{\ell}
\circ \kappa_{\ell,-}^{-1}\big)(\zeta_\ell)\,.
\end{equation}
If $\ell$ satisfies \eqref{eq:singular1} with $z=z_{(h)}$ for some $k \in \Zb_{\geq 0}$, then 
$|z_{(h)}|^2-L_\ell^2=b^2(\rho_{\beta_2}+\ell+k)^2=L^2_{\ell+\frac{m_\mrm}{2}+k}$\,.
If $\ell\geq \frac{m_\mrm}{2}$ satisfies \eqref{eq:singular2} with $z=z_{(h)}$ for some $m \in 
\leftFpar 0,\ell_0\rightFpar$ and $\ell_0=\ell-\frac{m_\mrm}{2}$, then 
$|z_{(h)}|^2-L_\ell^2=b^2(\rho_{\beta_1}+m)=L^2_m$\,.

In the first case, by \eqref{eq:resGell1}, the right-hand side of \eqref{eq:compres1} is equal to
\begin{multline*}
\frac{\eps_n L_n^2}{\sqrt{|z_{(h)}|^2-L_n^2}}  \; \frac{L_{\ell+\frac{m_\mrm}{2}+k}}{L_\ell^2}\;  \Res_{\zeta_\ell=-\zeta_{\ell,\ell+\frac{m_\mrm}{2}+k}} \big( \wt G_{\ell}
\circ \kappa_{\ell,-}^{-1}\big)(\zeta_\ell)\\
=\frac{i}{\pi^2}\; \frac{\eps_n L_n^2}{\sqrt{|z_{(h)}|^2-L_n^2}}  \; \frac{L_{\ell+\frac{m_\mrm}{2}+k}}{L_\ell^2}\;  C_{\ell,\ell+\frac{m_\mrm}{2}+k} \big( f\times \varphi_{\lambda(\ell,\ell+k)}\big)(y)\,.
\end{multline*}
In the second case, by \eqref{eq:resGell2}, the right-hand side of \eqref{eq:compres1} is equal to
\[
\frac{\eps_n L_n^2}{\sqrt{|z_{(h)}|^2-L_n^2}}  \; \frac{L_{m}}{L_\ell^2}\;  \Res_{\zeta_\ell=-\zeta_{\ell,m}} \big( \wt G_{\ell}
\circ \kappa_{\ell,-}^{-1}\big)(\zeta_\ell)
=\frac{i}{\pi^2}\;\frac{\eps_n L_n^2}{\sqrt{|z_{(h)}|^2-L_n^2}}  \; \frac{L_{m}}{L_\ell^2}\;   C_{\ell,m} \big( f\times \varphi_{\lambda(m,\ell_0)}\big)(y)\,.
\]
Observe that in both cases, the constants appearing are $i$ times a positive constant. 
Observe also that if $\ell\geq \frac{m_\mrm}{2}$ and $\wt G_{(N,\ell)}$ is singular at 
$(z_{(h)}, \zeta^{(h,\varepsilon)})$ with $\ell$ satisfying \eqref{eq:singular2} with $z=z_{(h)}$, some $m \in \leftFpar 0,\ell_0\rightFpar$ and $\ell_0=\ell-\frac{m_\mrm}{2}$, then 
$(z_{(h)}, \zeta^{(h,\varepsilon)})$ is also a singularity of $\wt G_{(N,m)}$ and 
$m$ satisfies \eqref{eq:singular1} with $z=z_{(h)}$ and $k=\ell_0-m\in \Zb_{\geq 0}$.
Of course, $\varphi_{\lambda(m,\ell_0)}=\varphi_{\lambda(m,m+k)}$ in this case. It follows that 
the set $S_h$ is sufficient to parametrize the residues of $\wt F$ at $(z_{(h)}, \zeta^{(h,\varepsilon)})$.

It follows that 
\begin{equation}
\label{eq:compresF}
\Res_{\zeta_n=\zeta^{(h,\eps)}_n} \big( \wt F
\circ \kappa_{n,\eps}^{-1}\big)(\zeta_n)=
\frac{i\eps_n L_n^2}{\sqrt{|z_{(h)}|^2-L_n^2}} \sum_{\ell\in S_h} c_\ell \big(f \times \varphi_{\lambda(\ell,\ell+k)}\big)(y)\,,
\end{equation}
where $k\in \Zb_{\geq 0}$ is associated with $\ell$ as in the definition of $S_h$ and $c_\ell$ is a positive constant depending only on $\ell$.

By Proposition \ref{pro:holoextRF}, the meromorphic extensions on the half-line $i(-\infty, -L]$ of $F$ and of the resolvent $R$ of the Laplacian are equivalent. Thus 
the resolvent $R$ can be lifted and meromorphically extended along the curve $\gamma_{N}$ in $\M_{\gamma_N}$. Its singularities 
(i.e. the resonances of the Laplacian) are those  of the meromorphic extension $\wt F$ of $F$ and are located at the  points 
of $\M_{\gamma_N}$ above the elements $z_{(h)}$. They are simple poles. 
The precise description is given by the following theorem.

\begin{thm}
\label{thm:meroextshiftedLaplacian}
Let $f \in C^\infty_c(\X)$ and $y \in \X$ be fixed. Let $N\in \mathbb N$ and let $\gamma_{N}$ be the curve lifting the inteval $-i(0,N+1)$ in $\M_{(N)}$.
Then the resolvent $R(z)=[R(z)f](y)$ lifts as a meromorphic function to the neighborhood $\M_{\gamma_{N}}$ of the curve $\gamma_{N}$ in $\M_{(N)}$. We denote the lifted meromorphic function by
$\wt R_{(N)}(z,\zeta)=\big[\wt R_{(N)}(z,\zeta)f\big](y)$.

The singularities of $\wt R_{(N)}$ are at most simple poles at the points $(z_{(h)},\zeta^{(h,\eps)})\in \M_{(N)}$ with $h\in \Zb_{\geq 0}$ 
so that $|z_{(h)}|<L_{N+1}$ and $\eps \in\{\pm 1\}^{N+1}$. Explicitly, for $(n,\eps)\in \leftFpar 0,N\rightFpar\times \{\pm 1\}^{N+1}$,
\begin{eqnarray}
\label{eq:RNnearwn}
\wt R_{(N)}(z,\zeta)=\wt H_{(N,m,\eps)}(z,\zeta)+2\pi i \sum_{\ell=0}^m \wt G_{(N,\ell)}(z,\zeta) \qquad
((z,\zeta)\in  U_{\eps(n^\vee)} \cup U_{n,\eps})
\,,
\end{eqnarray}
where $\wt H_{(N,m,\eps)}$ is holomorphic and $\wt G_{(N,\ell)}(z,\zeta)$
is in fact independent of $N$ and $\eps$ (but dependent on $f$ and $y$, which are omitted from the notation).
The singularities of $\wt R_{(N)}(z,\zeta)$ in $U_{\eps(n^\vee)} \cup U_{n,\eps}$ are simple poles at the points $(z_{(h)},\zeta^{(h,\eps)})$ belonging to $U_{\eps(n^\vee)} \cup U_{n,\eps}$. The residue of the local expression of $\wt R_{(N)}$ at one such point is $i\pi$ times the right-hand side of \eqref{eq:compresF}. 
\end{thm}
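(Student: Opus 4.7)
The proof is essentially an assembly of Proposition \ref{pro:holoextRF} with Theorem \ref{thm:meroliftF}, together with the residue computation carried out just before the statement. First I would note that by Proposition \ref{pro:holoextRF} one has $R(z)=H(z)+\pi i\,F(z)$ on $Q$ with $H$ holomorphic on a set containing $Q\cup\{z\in\C:\Im z<0\}$. Since the projection $\pi_{(N)}:\M_{(N)}\to \C^-$ is holomorphic, the composition $H\circ\pi_{(N)}$ is a holomorphic lift of $H$ to $\M_{(N)}$. By Theorem \ref{thm:meroliftF}, $F$ admits a meromorphic lift $\wt F$ to the neighborhood $\M_{\gamma_N}$ of the branched curve $\gamma_N$. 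I would therefore define
\[
\wt R_{(N)}(z,\zeta):=H(\pi_{(N)}(z,\zeta))+\pi i\,\wt F(z,\zeta),
\]
which is meromorphic on $\M_{\gamma_N}$ and agrees with the lift of $R$ wherever the latter is originally defined.

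To obtain the local decomposition \eqref{eq:RNnearwn}, I would substitute the explicit formula \eqref{eq:widetildeF} for $\wt F$ on the open set $U_{\eps(n^\vee)}\cup U_{n,\eps}$, whose projection lies in $W_{(n)}$. This expresses $\wt R_{(N)}$ as a scalar multiple of $\sum_{\ell=0}^n \wt G_{(N,\ell)}$ plus a remainder $\wt H_{(N,n,\eps)}$ assembled from $H\circ\pi_{(N)}$, from $\pi i\,F_{(n)}$, and from the antisymmetric tail $\sum_{\ell>n,\,\eps_\ell=-1}[\wt G_{(N,\ell)}(z,\zeta)-\wt G_{(N,\ell)}(z,-\zeta)]$. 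The non-trivial check is that this tail is holomorphic on $U_{\eps(n^\vee)}\cup U_{n,\eps}$: by Corollary \ref{cor:singGell}, every singularity of $\wt G_{(N,\ell)}$ with $\ell>n$ lies above a point $z$ with $|z|^2=L_\ell^2+L_m^2\geq L_{n+1}^2$, which is excluded from the projection $W_{(n)}$. Moreover, $\wt G_{(N,\ell)}=\wt G_\ell\circ\pi_{(N,\ell)}$ depends only on $z$ and on the coordinate $\zeta_\ell$, hence is independent of $N$ and $\eps$ as asserted.

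Finally, to locate the singularities and compute the residues, I would invoke Corollary \ref{cor:singGell} and Lemma \ref{lem:chart-expressions}: each $\wt G_{(N,\ell)}$ with $0\leq\ell\leq n$ has simple poles exactly above those points $z_{(h)}$ with $\ell\in S_h$, and together these exhaust the poles of $\wt R_{(N)}$ in $U_{\eps(n^\vee)}\cup U_{n,\eps}$, all of which lie above elements $z_{(h)}$ with $|z_{(h)}|<L_{N+1}$. The transformation rule \eqref{eq:compres1} converts the residues from the chart $\kappa_{\ell,-}$ to the chart $\kappa_{n,\eps}$; summing over $\ell\in S_h$ and using \eqref{eq:resGell1}--\eqref{eq:resGell2} produces exactly the formula \eqref{eq:compresF}, and multiplication by the global prefactor $\pi i$ coming from $R=H+\pi i F$ shows that the residue of the local expression of $\wt R_{(N)}$ equals $i\pi$ times the right-hand side of \eqref{eq:compresF}. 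The main subtlety in the whole argument is the combinatorial bookkeeping of which indices $\ell$ contribute to a given $z_{(h)}$, which is precisely what the set $S_h$ in \eqref{eq:Sh} encodes; beyond that, the proof is a direct substitution into results already established.
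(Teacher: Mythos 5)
Your proposal is correct and takes essentially the same route as the paper: the paper likewise obtains $\wt R_{(N)}$ by combining the decomposition $R=H+\pi i\,F$ of Proposition \ref{pro:holoextRF} with the meromorphic lift $\wt F$ of Theorem \ref{thm:meroliftF}, notes that the tail of \eqref{eq:widetildeF} is holomorphic on $U_{\eps(n^\vee)}\cup U_{n,\eps}$ because its singularities lie over points with $|z|^2=L_\ell^2+L_m^2>L_{n+1}^2$, and reads off the pole locations and residues from \eqref{eq:compres1}--\eqref{eq:compresF}. (The mismatch between the $4\pi i$ your substitution produces and the $2\pi i$ in \eqref{eq:RNnearwn} is immaterial, since the paper states all resolvent identities only up to non-zero constant multiples.)
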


\section{The residue operators}

Recall the notation $\lambda(\ell_1,\ell_2)= (\rho_{\beta_1}+\ell_1)\beta_1+(\rho_{\beta_2}+\ell_2)\beta_2$ introduced in \eqref{eq:lambdal1l2}.
For a fixed $h\in \Zb_{\geq 0}$, the sum over $S_h$ appearing on the right-hand side of \eqref{eq:compresF} is independent either of $N$ or $n$. It can be used to define 
the residue operator ${\Res}_{z_{(h)}}\wt R$ of the meromorphically extended resolvent at $z_{(h)}$. 
Explicitly, 
\begin{equation}\label{a1}
{\Res}_{z_{(h)}}\wt R=\sum_{\ell\in S_{h}} c_\ell R_{\lambda(\ell,\ell+k_\ell)}
\end{equation}
where, $c_\ell$ are non-zero constants and, as in \cite[(57)]{HPP14}, $R_{\lambda}: C_c^\infty(\X)\to C^\infty(\X)$ is defined by $R_{\lambda}f=f\times \varphi_{\lambda}$. 
We know from \cite[chapter IV, Theorem 4.5]{He2} that $R_{\lambda}(C_c^\infty(\X))$ is an irreducible representation of $\G$. Furthermore, two such representations are equivalent if and only if the spectral parameters $\lambda$ are in the same Weyl group orbit. Since, in our case, the Weyl group acts by transposition and sign changes, the element $\lambda(\ell_1,\ell_2)$
is dominant with respect to the fixed choice of positive roots if and only if 
$$
\rho_{\beta_2}+\ell_2\geq \rho_{\beta_1}+\ell_1\geq 0\,, \quad \text{i.e.}\quad 
\ell_2+ \frac{m_\mrm}{2}\geq \ell_1\,.
$$
In particular, all $\lambda(\ell,\ell+k_\ell)$ are distinct and dominant.
Hence, as a $\G$-module, 
\begin{equation}\label{a2}
{\Res}_{z_{(h)}}\wt R(C_c^\infty(\X))=\bigoplus_{\ell\in S_{(h)}} R_{\lambda(\ell,\ell+k_\ell)}(C_c^\infty(\X))\,.
\end{equation}
\begin{thm}\label{a3ell}
If $(\ell, k)\in \Zb_{\geq 0}^2$, then 
$
\dim R_{\lambda(\ell,\ell+k)}(C_c^\infty(\X))<\infty\,.
$
Thus ${\Res}_{z_{(h)}}\wt R(C_c^\infty(\X))$ is a finite dimensional $\G$-module.

The $\G$-module ${\Res}_{z_{(h)}}\wt R(C_c^\infty(\X))$ has $\K$-finite matrix coefficients (and is unitary)  if and only 
it is the trivial representation, which occurs for $h=0$, i.e. when $z_{(0)}=-i\sqrt{\inner{\rho}{\rho}}$.
\end{thm}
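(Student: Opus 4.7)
The plan is to first derive finite-dimensionality from the Cartan--Helgason characterization of finite-dimensional spherical representations, and then deduce the unitarity/triviality equivalence from the absence of nontrivial finite-dimensional unitary representations of a connected simple noncompact Lie group with finite center. By \cite[Ch.~IV, Thm.~4.5]{He2}, already invoked above, $R_\lambda(C_c^\infty(\X))$ is an irreducible spherical $\G$-module with $\varphi_\lambda$ as spherical matrix coefficient, so it is finite-dimensional precisely when a finite-dimensional spherical representation of $\G$ with highest weight $\mu:=\lambda-\rho$ exists. By the Cartan--Helgason theorem (\cite[Ch.~V, Thm.~4.1]{He2}), this amounts to $\mu_\alpha\in\Zb_{\geq 0}$ for every $\alpha\in\Sigma^+$, with $\mu_\alpha$ even whenever $2\alpha\in\Sigma$. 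For $\lambda=\lambda(\ell,\ell+k)$, one has $\mu=\ell\beta_1+(\ell+k)\beta_2$, and formulas \eqref{eq:lambdabetahalf}--\eqref{eq:lambdabeta12} give
\[
\mu_{\beta_1}=\ell,\quad \mu_{\beta_2}=\ell+k,\quad \mu_{(\beta_2-\beta_1)/2}=k,\quad \mu_{(\beta_2+\beta_1)/2}=2\ell+k,
\]
all non-negative integers; in the $BC_2$ case the short roots additionally yield $\mu_{\beta_j/2}=2\mu_{\beta_j}$, automatically even. Hence $\mu$ is a spherical dominant integral weight, $R_\lambda(C_c^\infty(\X))$ is finite-dimensional, and by \eqref{a2} so is ${\Res}_{z_{(h)}}\wt R(C_c^\infty(\X))$.

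For the second claim, any finite-dimensional representation carries $\K$-finite matrix coefficients automatically, so the substantive condition is unitarity. The key fact I will use is that every continuous finite-dimensional unitary representation $\pi:\G\to\Ug(n)$ of the connected simple noncompact Lie group $\G$ with finite center is trivial: the image $\pi(\G)$ is a compact Lie subgroup of $\Ug(n)$, so $d\pi(\g)$ sits inside the compact Lie algebra $\mathfrak{u}(n)$; since $\g$ is simple and noncompact, $d\pi$ cannot be injective and therefore vanishes, so $\pi$ is constant on the connected group $\G$. Consequently ${\Res}_{z_{(h)}}\wt R(C_c^\infty(\X))$ is unitary iff each irreducible summand $R_{\lambda(\ell,\ell+k_\ell)}(C_c^\infty(\X))$ is trivial, iff each $\lambda(\ell,\ell+k_\ell)$ lies in $\W\rho$. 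The dominance of $\lambda(\ell,\ell+k_\ell)$ recalled immediately before the statement then forces $\lambda(\ell,\ell+k_\ell)=\rho$ for every $\ell\in S_h$, which by \eqref{eq:lambdal1l2} requires $\ell=0$ and $k_\ell=0$. Thus $S_h=\{0\}$ and $b^{-2}|z_{(h)}|^2=\rho_{\beta_1}^2+\rho_{\beta_2}^2=b^{-2}\inner{\rho}{\rho}$, identifying $h=0$ and $z_{(0)}=-i\sqrt{\inner{\rho}{\rho}}$. Conversely, for $h=0$ one has $S_0=\{0\}$ and $\lambda(0,0)=\rho$, giving $\varphi_\rho\equiv 1$ and the (trivially unitary and $\K$-finite) trivial representation.

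The main point requiring care is the careful application of Cartan--Helgason: the paper's Plancherel analysis uses the unmultipliable roots $\Sigma_*^+$, whereas Cartan--Helgason is phrased on all positive restricted roots with the evenness rule for multipliable ones. Because $\mu$ is here an integer combination of the long roots $\beta_1,\beta_2$, integrality on all of $\Sigma^+$ is manifest and the evenness condition at $\beta_j/2$ follows automatically from $\mu_{\beta_j/2}=2\mu_{\beta_j}$. The remaining structural steps are routine once these verifications are in place.
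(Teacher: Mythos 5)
Your proof of the finite-dimensionality claim is correct and follows essentially the same route as the paper: both reduce to the Cartan--Helgason integrality criterion for $\mu=\lambda(\ell,\ell+k)-\rho=\ell\beta_1+(\ell+k)\beta_2$ and verify it by direct computation. The only difference is cosmetic: the paper invokes the criterion in the form of \cite[Ch.~II, \S 4, Thm.~4.16]{He3}, stated on the two simple roots of $\Sigma_*$ (so only $\mu_{\beta_1}=\ell$ and $\mu_{(\beta_2-\beta_1)/2}=k$ need checking), whereas you verify the condition on all of $\Sigma^+$ together with the evenness rule at the multipliable roots; the two formulations are equivalent, and your extra checks ($\mu_{(\beta_2+\beta_1)/2}=2\ell+k$, $\mu_{\beta_j/2}=2\mu_{\beta_j}$ even) are correct. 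For the second claim the paper's proof offers no argument at all, so your final paragraph genuinely supplements it, and it is sound: $\K$-finiteness of matrix coefficients is automatic in finite dimensions, so the substantive condition is unitarity read conjunctively; unitarity of the finite direct sum \eqref{a2} forces an invariant inner product on each irreducible summand, which for a connected simple noncompact $\G$ with finite center kills $d\pi$ and leaves only the trivial representation; and triviality of $R_{\lambda(\ell,\ell+k_\ell)}(C_c^\infty(\X))$ together with the dominance of $\lambda(\ell,\ell+k_\ell)$ pins down $\lambda(\ell,\ell+k_\ell)=\rho$, i.e.\ $\ell=k_\ell=0$, $S_h=\{0\}$ and $z_{(h)}=z_{(0)}=-i\sqrt{\inner{\rho}{\rho}}$, with the converse immediate from $\varphi_\rho\equiv 1$. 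In short: same method where the paper gives one, and a correct completion where it does not.
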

\begin{proof}
By \cite[Ch II, \S 4, Theorem  4.16]{He3}, $\dim R_{\lambda(\ell_1,\ell_2)}(C_c^\infty(\X))<\infty$
if and only if there is $w\in W$ such that 
\begin{equation}\label{a4}
(w\lambda(\ell_1,\ell_2)-\rho)_\beta\in \Bbb Z_{\geq 0} \qquad (\beta\in \Sigma_*^+, \text{$\beta$ simple})\,.
\end{equation}
Recall that the simple roots in $\Sigma_*^+$ are $\beta_1$ and $\frac{\beta_2-\beta_1}{2}$.
Moreover, $\mu_{\frac{\beta_2-\beta_1}{2}}=\mu_{\beta_2} -\mu_{\beta_1}$ for $\mu\in \a_\C^*$. 
Since 
$$
\lambda(\ell,\ell+k)-\rho=(\rho_{\beta_1}+\ell)\beta_1+(\rho_{\beta_2}+\ell+k)\beta_2-
(\rho_{\beta_1}\beta_1+\rho_{\beta_2}\beta_2)=\ell \beta_1+(\ell+k)\beta_2\,,
$$
we conclude that 
\begin{eqnarray*}
&&(\lambda(\ell,\ell+k)-\rho)_{\beta_1}=\ell\in \Zb_{\geq 0}\,,\\
&&(\lambda(\ell,\ell+k)-\rho)_{(\beta_2-\beta_1)/2}=k\in \Zb_{\geq 0}\,,
\end{eqnarray*}
which satisfies \eqref{a4} with $w=\id$.
\end{proof}


\end{document}